\newcommand\blfootnote[1]{%
	\begingroup
	\renewcommand\thefootnote{}\footnote{#1}%
	\addtocounter{footnote}{-1}%
	\endgroup
}
\newtheorem{theorem}{Theorem}[section]
\newtheorem{proposition}[theorem]{Proposition}
\newtheorem{lemma}[theorem]{Lemma}
\newtheorem{corollary}[theorem]{Corollary}
\newcommand{\N}{\mathbb{N}}
\newcommand{\Z}{\mathbb{Z}}
\def\vv{\mbox{\boldmath $v$}}
\def\vec0{\mbox{\boldmath $0$}}
\def\A{\mbox{\boldmath $A$}}
\def\I{\mbox{\boldmath $I$}}
\def\U{\mbox{\boldmath $U$}}
\def\I{\mbox{\boldmath $I$}}
\def\N{\mbox{\boldmath $N$}}
\DeclareMathOperator{\rank}{rank}
\DeclareMathOperator{\spec}{sp}
\newcommand{\bigzero}{\mbox{\normalfont\Large\bfseries 0}}
\begin{document}

\title{Bipartite biregular Moore graphs\footnote{
The research of the first author is supported by CONACyT-M{\' e}xico under Project 282280 and PAPIIT-M{\' e}xico under Project IN101821, PASPA-DGAPA, and CONACyT Sabbatical Year 2020. The research of the second and third authors is partially supported by 
AGAUR from the Catalan Government under project 2017SGR1087 and by MICINN from the Spanish Government under project PGC2018-095471-B-I00. The research of the second and fourth authors is supported by MICINN from the Spanish Government under project MTM2017-83271-R.}}
\author{G. Araujo-Pardo$^a$, C. Dalf\'o$^b$, M. A. Fiol$^c$, N. L\'opez$^d$\\
\\
\small{$^a$Instituto de Matem\'aticas, Universidad Nacional Aut\'onoma de M\'exico}\\
{\small Mexico, \texttt{garaujo@math.unam.mx}}\\
\small{$^b$Departament de Matem\`{a}tica, Universitat de Lleida}\\
\small{Igualada (Barcelona), Catalonia, \texttt{cristina.dalfo@udl.cat}}\\
\small{$^c$Departament de Matem\`{a}tiques, Universitat Polit\`{e}cnica de Catalunya}\\
\small{Barcelona Graduate School of Mathematics}\\
\small{Institut de Matem\`atiques de la UPC-BarcelonaTech (IMTech)}\\
\small{Barcelona, Catalonia, \texttt{miguel.angel.fiol@upc.edu}}\\
\small{$^d$ Departament de Matem\`atica, Universitat de Lleida}\\
 \small{Lleida, Spain, \texttt{nacho.lopez@udl.cat}}}

\maketitle

\date{}


\blfootnote{
	\begin{minipage}[l]{0.3\textwidth} \includegraphics[trim=10cm 6cm 10cm 5cm,clip,scale=0.15]{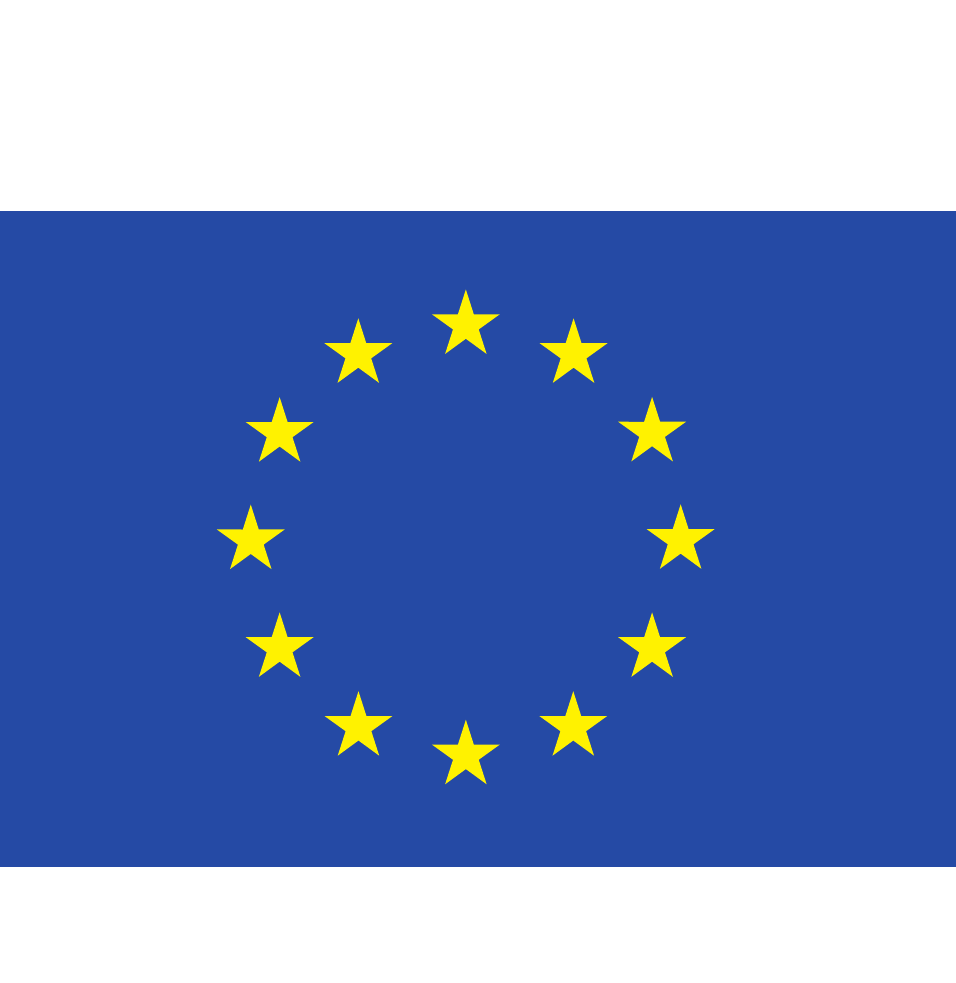} \end{minipage}  \hspace{-2cm} \begin{minipage}[l][1cm]{0.79\textwidth}
		The research of the second author has also received funding from the European Union's Horizon 2020 research and innovation programme under the Marie Sk\l{}odowska-Curie grant agreement No 734922.
\end{minipage}}

\noindent\emph{Keywords:} Bipartite biregular graphs, Moore bound, diameter, adjacency spectrum.\\
\emph{MSC2020:} 05C35, 05C50.


\begin{abstract}
A bipartite graph $G=(V,E)$ with $V=V_1\cup V_2$ is biregular if all the vertices of a stable set $V_i$ have the same degree $r_i$ for $i=1,2$. In this paper, we give an improved new Moore bound for an infinite family of such graphs with odd diameter. This problem was introduced in 1983 by Yebra, Fiol, and F\`abrega.\\
Besides, we propose some constructions of bipartite biregular graphs with diameter $d$ and large number of vertices $N(r_1,r_2;d)$, together with their spectra.
In some cases of diameters $d=3$, $4$, and $5$, the new graphs attaining the Moore bound are unique up to isomorphism.
\end{abstract}

\section{Introduction}
The {\emph{degree/diameter problem}} for graphs consists in finding the largest order of a graph with prescribed degree and diameter. We call this number the \emph{Moore bound}, and a graph whose order coincides with this bound is called a {\emph{Moore graph}}. 
	
	There is a lot of work related to this topic (see a survey by Miller and \v{S}ir\'a\v{n} \cite{MS16}), and also some restrictions of the original problem. One of them is related to the bipartite Moore graphs. In this case, the goal is finding regular bipartite graphs with maximum order and fixed diameter. In this paper, we study the problem, proposed by Yebra, Fiol and  F\`abrega \cite{YFF83} in 1983, that consists in finding biregular bipartite Moore graphs.  
	
	A bipartite graph $G=(V,E)$ with $V=V_1\cup V_2$ is biregular if, for $i=1,2$, all the vertices of a stable set $V_i$ have the same degree. We denote $[r,s;d]$-bigraph a bipartite biregular graph of degrees $r$ and $s$ and diameter $d$; and by  $[r,s;d]$-bimoore graph the bipartite biregular graph of diameter $d$ that attains the Moore bound, which is denoted    $M(r,s;d)$.
	Notice that constructing these graphs is equivalent to construct block designs, where one partite set corresponds to the points of the block design, and the other set corresponds to the blocks of the design. Moreover, each point is in a fixed number $s$ of blocks, and the size of each block is equal to $r$. The incidence graph of this block design is an $[r,s;d]$-biregular bipartite graph of diameter $d$.
	
	This type of graph is often used as an alternative to a hypergraph in modeling some interconnection networks. Actually, several applications deal with the study of bipartite graphs such that all vertices of every partite set have the same degree. For instance, in an interconnection network for a
	multiprocessor system, where the processing elements communicate through buses, it is useful that each processing element is connected to the same number of buses and also that each bus is connected to the same number of processing elements to have a uniform traffic through the network. These networks can be modeled by hypergraphs (see Bermond,  Bond, Paoli, and Peyrat \cite{BeBoPaPe83}), where the vertices indicate the processing elements and the edges indicate the buses of the system. They can also be modeled by bipartite graphs with a set of vertices for the processing elements, another one for the buses, and
	edges that represent the connections between processing elements and buses since all vertices of each set have the same degree.
	
	The degree/diameter problem is strongly related to the  {\em degree\/}/{\em girth problem\/} (also known as {\em the cage problem\/}) that consists in finding the smallest order of a graph with prescribed degree and girth (see the survey by Exoo and Jajcay \cite{ExooJaj08}). Note that when for an even girth of the graph, $g=2d$, the lower bound of this value coincides with the Moore bound for bipartite graphs (the largest order of a bipartite regular graph with given diameter $d$).
	
	In the bipartite biregular problem, we have the same situation. In 2019, Filipovski, Ramos-Rivera and Jajcay \cite{FilRamRivJaj19} introduced the concept of bipartite biregular Moore cages and presented lower bounds on the orders of bipartite biregular $(m,n;g)$-graphs. The bounds when $g=2d$ and $d$ even also coincide with the bounds given by Yebra, Fiol, and  F\`abrega in \cite{YFF83}. Note that these bounds only coincide when the diameter is even. The cases for odd diameter and girth $g=2d$ are totally different, even for the extreme values.

The contents of the  paper are as follows. In the rest of this introductory section, we recall the Moore-like bound  $M(r,s;d)$ derived in Yebra, Fiol, and F\`abrega \cite{YFF83} on the order of a bipartite biregular graph with degrees $r$ and $s$, and diameter $d$. Following the same problem of obtaining good bounds,  in Section \ref{sec:improvedMoore} we prove that, for some cases of odd diameter,  the Moore bound of \cite{YFF83} can be improved (see the new bounds in Tables \ref{tab:d=3} and \ref{tab:d=5}).
In the two following sections, we basically deal with the case of even diameter because known constructions provide optimal (or very good) results. Thus, Section \ref{gen-pols} is devoted to the Moore bipartite biregular graphs associated with generalized polygons.
In Section \ref{gen-cons}, we propose two general graph constructions: the subdivision graphs giving Moore bipartite biregular graphs with even diameter, and the semi-double graphs that, from a bipartite graph of any given diameter, allows us to obtain another bipartite graph with the same diameter but with a greater number of vertices. For these two constructions, we also give the spectrum of the obtained graphs.

	Finally, a numeric construction of bipartite biregular Moore graphs for diameter $d=3$ and degrees $r$ and $3$ is proposed in Section \ref{sec:d=3}. 

\subsection{Moore-like bounds}

Let $G=(V,E)$,  with $V=V_1\cup V_2$,  be a $[r,s;d]$-bigraph, where each vertex of $V_1$ has degree $r$, and each vertex of $V_2$ has degree $s$. Note that, counting in two ways the number of edges of $G$, we have
\begin{equation}
\label{basic=}
r N_1=s N_2,
\end{equation}
where $N_i=|V_i|$, for $i=1,2$.

Moreover, since $G$ is bipartite with diameter $d$, from one vertex $u$ in one stable set, we must reach {\bf all} the vertices of the other set in at most $k-1$ steps.
Suppose first that the diameter is even, say, $k=2m$ (for $m\ge 1$). Then, by simple counting, if $u\in V_1$, we get
\begin{equation}
\label{N2-even}
N_2\le r+r(r-1)(s-1)+\cdots+r[(r-1)(s-1)]^{m-1}= r\frac{[(r-1)(s-1)]^{m}-1}{(r-1)(s-1)-1},
\end{equation}
and, if $u\in V_2$,
\begin{equation}
\label{N1-even}
N_1\le s\frac{[(r-1)(s-1)]^{m}-1}{(r-1)(s-1)-1}.
\end{equation}
In the case of equalities in \eqref{N2-even}  and \eqref{N1-even}, condition \eqref{basic=} holds,
and the Moore bound is
\begin{equation}
\label{Moore-even}
M(r,s;2m)= (r+s)\frac{[(r-1)(s-1)]^{m}-1}{(r-1)(s-1)-1}.
\end{equation}
The Moore bounds for $2\le s\le r\le 10$ and $d=4,6$ are shown in Tables \ref{tab:d=4} and \ref{tab:d=6}, where values in boldface are known to be attainable.

\begin{table}[t]
\begin{center}
\begin{tabular}{|c||c|c|c|c|c|c|c|c|c|}
\hline
$r\setminus s$ & 2            & 3            & 4   & 5   & 6   & 7   & 8    & 9   & 10    \\
\hline \hline
2              & $\textbf{8}^{\bullet}$                                                      \\ \cline{1-3}
3              & $\textbf{15}^{* \diamond}$  & \textbf{30}                                               \\ \cline{1-4}
4              & $\textbf{24}^{* \diamond}$  & 49  & \textbf{80}                                         \\ \cline{1-5}
5              & $\textbf{35}^*$  & $\textbf{72}^{\bullet}$  & 117 & \textbf{170}                                  \\ \cline{1-6}
6              & $\textbf{48}^*$  & 99           & 160 & 231 & \textbf{312}                            \\ \cline{1-7}
7              & $\textbf{63}^*$  & 130          & 209 & 300 & 403 & $\stackrel{(518)}{516}$                      \\ \cline{1-8}
8              & $\textbf{80}^*$  & 165          & 264 & 377 & 504 & 645 & \textbf{800}                \\ \cline{1-9}
9              & $\textbf{99}^*$  & 204          & 325 & 462 & 615 & 784 & 969  & \textbf{1170}        \\ \cline{1-10}
10            & $\textbf{120}^*$ & 247          & $\textbf{292}^{\bullet}$ & 555 & 736 & 935 & 1152 & 1387 & \textbf{1640} \\ 
\hline
\end{tabular}
\end{center}
\vskip-.25cm
\caption{Moore bounds for diameter $d=4$. The attainable known values are in boldface.
The asterisks correspond to the subdivision graphs $S(K_{r,r})$, see Section \ref{sec:subdiv-graphs}.
The symbol `$\bullet$' indicates the orders of the graphs according to Theorem \ref{bb-Moore} and the diamonds correspond  to unique graphs.}
\vskip1cm
\label{tab:d=4}
\end{table}

\begin{table}[t]
\begin{center}
\begin{tabular}{|c||c|c|c|c|c|c|c|c|c|}
\hline
$r\setminus s$ & 2            & 3            & 4   & 5   & 6   & 7   & 8    & 9   & 10    \\
\hline \hline
2              & $\textbf{12}^{\bullet}$                                                      \\ \cline{1-3}
3              & \textbf{35*}  & \textbf{126}                                               \\ \cline{1-4}
4              & \textbf{78*} & 301   & \textbf{728}                                         \\ \cline{1-5}
5              &  \textbf{147*}  & 584    & 1431      & \textbf{2730}                                  \\ \cline{1-6}
6              &  \textbf{248*}  & 999    & 2410      & 4631       & \textbf{7812}                            \\ \cline{1-7}
7              &  387   & 1570   & 3773     & 7212       & 12103   &   $\stackrel{(18662)}{18660}$                    \\ \cline{1-8}
8              &  \textbf{570*}  & 2321   & 5556    & 10569      & 17654   & 27105      & \textbf{39216}                \\ \cline{1-9}
9              &  \textbf{803*}   & 3276   & $\textbf{7813}^{\bullet}$   &14798       & 24615   & 37648      & 54281     & \textbf{74898}        \\ \cline{1-10}
10             &  \textbf{1092*} & 4459  & 10598 & 19995      & 33136  &  50507      & 72594     & 99883          & \textbf{132860} \\ 
\hline
\end{tabular}
\end{center}
\vskip-.25cm
\caption{Moore bounds for diameter $d=6$. The attainable known values are in boldface. The asterisks correspond to the bimoore graphs of Proposition \ref{bb(s=2,d=6)-Moore}.  The `$\bullet$' indicates the order of the graph according to Theorem
\ref{bb-Moore}.}
\label{tab:d=6}
\end{table}

Similarly, if the diameter is odd, say, $k=2m+1$ (for $m\ge 1$), and $u\in V_1$, we have
\begin{equation}
\label{N1-odd}
N_1\le 1+r(s-1)+\cdots + r(s-1)[(r-1)(s-1)]^{m-1}=1+r(s-1)\frac{[(r-1)(s-1)]^{m}-1}{(r-1)(s-1)-1}=N_1',
\end{equation}
whereas, if $u\in V_2$,
\begin{equation}
\label{N2-odd}
N_2\le 1+s(r-1)\frac{[(r-1)(s-1)]^{m}-1}{(r-1)(s-1)-1}=N_2'.
\end{equation}
But, in this case, $N_1'r\neq N_2's$ and, hence, the Moore bound must be smaller than $N_1'+N_2'$. In fact, it was proved in Yebra, Fiol, and F\`abrega \cite{YFF83} that, assuming $r>s$,
\begin{equation}
\label{N1-N2-odd}
N_1\le \left\lfloor \frac{N_2'}{\rho}\right\rfloor \sigma\qquad\mbox{and}
\qquad N_2\le \left\lfloor \frac{N_2'}{\rho}\right\rfloor \rho,
\end{equation}
where $\rho=\frac{r}{\gcd\{r,s\}}$ and $\sigma=\frac{s}{\gcd\{r,s\}}$.
Then, in this case, we take the Moore bound
\begin{equation}
\label{Moore-odd}
M(r,s;2m+1)= \left\lfloor\frac{1+s(r-1)\frac{[(r-1)(s-1)]^{m}-1}{(r-1)(s-1)-1}}{\rho}\right\rfloor (\rho+\sigma).
\end{equation}


Two bipartite biregular graphs with diameter three attaining the Moore bound \eqref{Moore-odd}  were given in \cite{YFF83}. Namely, in Figure \ref{3unics}$(a)$, with $r=4$ and $s=3$, we would have the unattainable values $(N_1',N_2')=(9,10)$, whereas we get $(N_1,N_2)=(6,8)$, giving $M(4,3;3)=14$. In Figure \ref{3unics}$(b)$, with $r=5$ and $s=3$, $(N_1',N_2')=(11,13)$, and $(N_1,N_2)=(6,10)$, now corresponding to $M(5,3;3)=16$.

\begin{figure}[t]
	\centering
	\includegraphics[width=\linewidth]{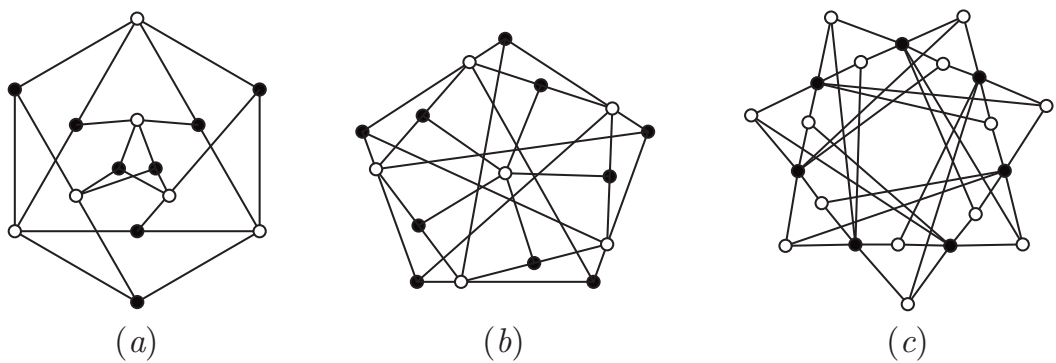}
	\vskip-6.75cm
	\caption{$(a)$ The only [4,3;3]-bimoore graph on $14$ vertices; $(b)$ One of the two [5,3;3]-bimoore graph on $16$ vertices; $(c)$ The only [6,3;3]-bimoore graph on $21$ vertices.}
	\label{3unics}
\end{figure}

As shown in Section \ref{sec:improvedMoore}, the bound \eqref{Moore-odd} can be improved  for some values of the degree $r$.
So, we display there the tables of the new Moore bounds for small values of $r,s$ and diameters $d=3$ and $d=5$.

Recall that
if $G=(V,E)$ is an $r$-regular graph of diameter $d$, then its \emph{defect} is $\delta=\delta(G)=M(r;d)-|V|$, where $M(r;d)$ stands for the corresponding Moore bound. Thus, in this paper, the defect of a $[r,s;d]$-bigraph $G=(V_1\cup V_2,E)$ is defined as $\delta=M(r,s;d)-|V_1\cup V_2|$.

\section{An improved Moore bounds for odd diameter}
\label{sec:improvedMoore}

Let us begin with a simple result concerning the girth of the  possible  biregular bipartite graphs attaining the bound \eqref{Moore-odd} for odd diameter.

\begin{lemma}
\label{g<=4m}
Every biregular bipartite graph $G$ of odd diameter $d=2m+1$ with order attaining the Moore bound \eqref{Moore-odd} has girth $g\le 4m$.
\end{lemma}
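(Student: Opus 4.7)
The plan is to argue by contradiction: assume $g \ge 4m+2$ and write $g = 2\ell$ with $\ell \ge 2m+1$. The first ingredient is the standard observation that in a bipartite graph of girth $2\ell$, the BFS exploration rooted at any vertex is a genuine tree down to depth $\ell - 1$, since two distinct shortest paths meeting at a vertex of smaller depth would close a cycle of length less than $2\ell$. Because $\ell - 1 \ge 2m$, the BFS tree from every vertex down to depth $2m$ has exactly the branching dictated by the degrees, with no repeated vertices.

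Next, starting from an arbitrary $u \in V_1$, I would count $V_1$-vertices at even depths $0, 2, \ldots, 2m$; the tree structure gives exactly
\[
1 + r(s-1) + r(s-1)(r-1)(s-1) + \cdots + r(s-1)[(r-1)(s-1)]^{m-1} \;=\; N_1'
\]
distinct vertices, the very quantity appearing in \eqref{N1-odd}. The diameter assumption $d = 2m+1$ forces every $V_1$-vertex to be within distance $2m$ of $u$ (two $V_1$-vertices are always at even distance), whence $N_1 = N_1'$. By the symmetric argument starting from some $v \in V_2$, I would also obtain $N_2 = N_2'$, so $|V| = N_1' + N_2'$.

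Finally, I would combine this with the handshake identity \eqref{basic=}, namely $rN_1 = sN_2$, to obtain the equality $rN_1' = sN_2'$. Setting $K = \frac{[(r-1)(s-1)]^m - 1}{(r-1)(s-1) - 1}$ and using the telescoping relation $K\bigl[(r-1)(s-1) - 1\bigr] = [(r-1)(s-1)]^m - 1$, a short expansion collapses to
\[
rN_1' - sN_2' \;=\; (r-s)\bigl[1 + K(rs - r - s)\bigr] \;=\; (r-s)\,[(r-1)(s-1)]^m,
\]
which is strictly positive under the standing assumption $r > s \ge 2$ used to derive \eqref{Moore-odd}. This contradicts $rN_1' = sN_2'$, proving $g \le 4m$. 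Alternatively, since the Moore bound satisfies $M(r,s;d) < N_1' + N_2'$ precisely because the joint constraints $N_i \le N_i'$ and $rN_1 = sN_2$ cannot be simultaneously tight, the equality $|V| = N_1' + N_2'$ already contradicts the attainment hypothesis. The only mildly delicate step in either route is the clean collapse in the last display; everything else is routine BFS bookkeeping combined with the parity of distances in a bipartite graph.
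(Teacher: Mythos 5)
Your proposal is correct, and its skeleton coincides with the paper's: assume girth $4m+2$, observe that the distance-$2m$ BFS trees are then genuine Moore trees with all vertices distinct, and conclude that $N_1=N_1'$ and $N_2=N_2'$ as in \eqref{N1-odd} and \eqref{N2-odd}. The difference lies in how you close the argument. The paper derives the contradiction from the order: a graph attaining \eqref{Moore-odd} has $\lfloor N_2'/\rho\rfloor(\rho+\sigma)<N_1'+N_2'$ vertices, so it cannot have $N_1'+N_2'$ of them. Your primary route instead contradicts the edge-counting identity \eqref{basic=}, via the clean computation $rN_1'-sN_2'=(r-s)[(r-1)(s-1)]^m>0$ (which I checked and is right). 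This buys you something the paper's version does not state: your argument never uses the attainment hypothesis, so it actually shows that \emph{no} $[r,s;2m+1]$-bigraph with $r>s$ can have girth $4m+2$, a slightly stronger fact. Your ``alternative'' closing remark is precisely the paper's proof. One small point of care: the girth is only defined if the graph contains a cycle, which is guaranteed here since $r>s\ge 2$ forces minimum degree at least $2$; it is worth saying so explicitly.
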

\begin{proof}
Since $G$ is bipartite, we must have $g\le 2d=4m+2$. Consider the trees $T_1$  rooted at $u\in V_1$ , and $T_2$ rooted at $v\in V_2$, of vertices at distance at most $2m$ of their roots. If  $G$ has girth $g=4m+2$, all vertices of $T_1$ must be different (otherwise, $T_1$ could not have the maximum number of vertices), and the same holds for all the vertices of $T_2$. This occurs if and only if $T_1$ and $T_2$ have numbers of vertices $N_1'$ and $N_2'$ given
by \eqref{N1-odd} and \eqref{N2-odd}, respectively.
But this is not possible because the Moore bound \eqref{Moore-odd} is obtained from \eqref{N1-N2-odd} as $\lfloor N_2'/\rho\rfloor(\rho+\sigma) <N_1'+N_2'$ (since $r>s\Rightarrow N_2'>N_1'$). Hence, the girth of $G$ is at most $4m$.
\end{proof}

\subsection{The case of diameter three}
As a consequence of the following result, we prove in Corollary \ref{coro:optimal} that the $[6,3;3]$-graph of Figure \ref{3unics}$(c)$ has the maximum possible order. 
\\
\begin{proposition}
\label{nonupperbound}
If $\rho=\frac{r}{\gcd\{r,s\}}$  divides $s-1$, then there is no $[r,s;3]$-graph with order attaining the Moore-like bound in \eqref{Moore-odd}.
Instead, the new improved Moore bound is
\begin{equation}
\label{Moore-odd-improved(d=3)}
M^*(r,s;3)= (1+s(r-1)-\rho)\left(1+\frac{\sigma}{\rho}\right),
\end{equation}
with
\begin{equation}
\label{N1-N2-(d=3)}
N_1\le [1+s(r-1)-\rho]\frac{\sigma}{\rho}
\qquad \mbox{and}\qquad N_2\le 1+s(r-1)-\rho,
\end{equation}
where $\rho=\frac{r}{\gcd\{r,s\}}$ and $\sigma=\frac{s}{\gcd\{r,s\}}$.
\end{proposition}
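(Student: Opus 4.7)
The plan is to rule out the extremal case of \eqref{Moore-odd} by a girth-based contradiction. First, I would exploit the basic identity \eqref{basic=}, which after dividing by $\gcd\{r,s\}$ reads $\rho N_1=\sigma N_2$; since $\gcd(\rho,\sigma)=1$ this forces $\rho\mid N_2$ and $\sigma\mid N_1$. Next, because $\rho\mid r$, a reduction mod $\rho$ gives $N_2'=1+s(r-1)\equiv 1-s\pmod{\rho}$, so the hypothesis $\rho\mid s-1$ is equivalent to $\rho\mid N_2'$. In particular the floor in \eqref{Moore-odd} is vacuous, so the old Moore-like bound collapses to $N_2\le N_2'$ together with $N_1=\sigma N_2/\rho$.

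The key step would be to show that $N_2=N_2'$ is impossible. I would assume it holds and derive a contradiction via the girth. Running BFS from any $v\in V_2$, the number of vertices reached at distance $\le 2$ is at most $1+s(r-1)=N_2'$, with equality iff no two length-$2$ paths issued from $v$ end at the same vertex, equivalently, iff no $4$-cycle passes through $v$. Since $G$ is bipartite, every $4$-cycle contains two vertices of $V_2$, so the absence of $4$-cycles through \emph{every} $v\in V_2$ globally forces the girth of $G$ to be at least $6$. Applying the same BFS argument at any $u\in V_1$, the absence of $4$-cycles prevents collisions and yields $N_1\ge 1+r(s-1)=N_1'$. But $\rho N_1=\sigma N_2=\sigma N_2'$ forces $N_1=\sigma N_2'/\rho$, so the required inequality becomes $\sigma N_2'\ge \rho N_1'$. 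A direct algebraic expansion will give
\begin{equation*}
\rho N_1'-\sigma N_2'=\frac{(r-s)(r-1)(s-1)}{\gcd\{r,s\}}>0
\end{equation*}
for $r>s\ge 2$, contradicting what was just derived.

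Hence $N_2<N_2'$, and since both quantities are multiples of $\rho$, the next admissible value is $N_2\le N_2'-\rho=1+s(r-1)-\rho$. Substituting back into $\rho N_1=\sigma N_2$ gives $N_1\le (1+s(r-1)-\rho)\sigma/\rho$, and summing the two estimates yields exactly \eqref{Moore-odd-improved(d=3)}. The main obstacle I expect is the BFS/girth step: one has to apply the no-collision reasoning uniformly at every vertex of $V_2$ to globally exclude $4$-cycles, and then transport the resulting girth bound to a lower bound on $|V_1|$ on the opposite side of the bipartition, the precise contradiction hinging on the asymmetry $r>s$ through the identity displayed above.
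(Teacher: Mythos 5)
Your proof is correct and follows essentially the same route as the paper's: the hypothesis $\rho\mid s-1$ makes the floor in \eqref{Moore-odd} vacuous and forces $N_2=N_2'$ at the extremal value, which excludes $4$-cycles, pushes the girth to at least $6$, and then contradicts the edge-count identity $\rho N_1=\sigma N_2$ via $\rho N_1'-\sigma N_2'=(r-s)(r-1)(s-1)/\gcd\{r,s\}>0$. The only differences are cosmetic: you inline the girth contradiction that the paper delegates to Lemma~\ref{g<=4m}, and you make explicit the final descent $N_2\le N_2'-\rho$ (using that both are multiples of $\rho$), which the paper leaves implicit.
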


\begin{proof}
Suppose that, under the hypothesis, there exists a $[r,s;3]$-graph $G=(V_1\cup V_2,E)$ that attains the upper bound in  \eqref{Moore-odd}. Then, with $r>s$, $|V_1|$ is the number of vertices of degree $r$,  $ |V_2|$ is the number of vertices of degree $s$, and $N_1=|V_1|< |V_2|=N_2$.
Thus, for diameter $d=3$, we have
$$
N_2=\left\lfloor \frac{1+s(r-1)}{\rho}\right\rfloor\rho=1+s(r-1)=N_2'.
$$
This means that there is only one shortest path of length at most 2 from any vertex $v\in V_2$ to all the vertices of $V_2$.
Hence, the girth of $G$ is larger than 4. If not, we would have a cycle
$u_1\sim v_1\sim u_2\sim v_2 (\sim u_1)$, with $u_i\in V_1$ and $v_i\in V_2$ for $i=1,2$. So, there would be 2 shortest 2-paths between $v_1$ and $v_2$.
This is a contradiction with Lemma \ref{g<=4m} and, hence, the upper bound \eqref{Moore-odd} cannot be attained.

\end{proof}
Otherwise, if $\rho$ does not divide $s-1$, we have the bound \eqref{Moore-odd} with $m=1$, where \eqref{N1-N2-odd} yields
\begin{equation}
\label{N1-N2(d=3)not-divide}
N_1\le \left\lfloor s\cdot \gcd\{r,s\}-\frac{s-1}{\rho}\right\rfloor \sigma\qquad\mbox{and}\qquad N_2\le \left\lfloor s\cdot \gcd\{r,s\}-\frac{s-1}{\rho}\right\rfloor\rho.
\end{equation}

In Table \ref{tab:d=3}, we show the values of the Moore bounds in \eqref{Moore-odd} and \eqref{Moore-odd-improved(d=3)} for $s\le 2\le r\le 11$ and diameter $d=3$.
The values between parenthesis correspond to the old bound \eqref{Moore-odd} that was given in \cite{YFF83}.  The attainable known values are in boldface. The asterisks indicate the values obtained in this paper (see Proposition \ref{propo:(r,3)}). The $[6,3;3]$-bigraph with $21$ vertices of Figure \ref{3unics}$(c)$ can be shown to be unique, up to isomorphisms.

\begin{table}[t]
	\begin{center}
		\begin{tabular}{|c||c|c|c|c|c|c|c|c|c|c|}
			\hline
			$r\setminus s$ & 2            & 3   & 4   & 5   & 6   & 7   & 8    & 9   & 10  & 11  \\
			\hline \hline
			2   & \textbf{6}                                                      \\ \cline{1-3}
			3   & \textbf{5}   & \textbf{14}                                               \\ \cline{1-4}
			4   & \textbf{9}   & $\textbf{14}^{* \diamond}$       & \textbf{26}                                \\ \cline{1-5}
			5   & \textbf{7}   & \textbf{16*}            &  27  & \textbf{42}   \\ \cline{1-6}
			6   & \textbf{12} &  $\stackrel{(24)}{\textbf{21}^{\diamond}}$ & $\stackrel{(35)}{30}$                       &  44  & \textbf{62}                             \\ \cline{1-7}
			7   & \textbf{9}   & \textbf{20*}          & 33                       & 48  &  65 &  86                       \\ \cline{1-8}
			8   & \textbf{15} & \textbf{22*}          & 42                       & 52  & 70  &  90  & \textbf{114}                \\ \cline{1-9}
			9   & \textbf{11} & 32                           & 39                       & 56  & 80  & 96  &  119 & \textbf{146}          \\ \cline{1-10}
			10 & \textbf{18} & \textbf{26*}          &  49 & $\stackrel{(69)}{66}$  &  $\stackrel{(88)}{80}$  & 102 & 126 & 152 & \textbf{182}  \\ \cline{1-11}
			11 & \textbf{13} & \textbf{28*}          & 45                       & 64  & 85  & 108 & 133 & 160  & 189  & 222  \\
			\hline
		\end{tabular}
	\end{center}
	\vskip-.25cm
	\caption{Best Moore bounds for diameter $d=3$. The attainable known values are in boldface.
The asterisks correspond to the graphs obtained according to Proposition \ref{propo:(r,3)}, and the diamonds correspond to unique graphs. The values between parenthesis correspond to the old (unattainable) bound \eqref{Moore-odd}.}
\label{tab:d=3}
\vskip1cm
\end{table}

As a consequence of the above results, we get the following Moore bounds when the degree $r$ is a multiple of the degree $s$.
\begin{corollary}
For $s\ge 2$, the best Moore bounds for the orders $N_1$ and $N_2$ of a $[\rho s,s;3]$-graph are as follows:
\begin{itemize}
\item[$(i)$] If $\rho|(s-1)$, then
\begin{equation*}
N_1 \le (s^2-1)-\frac{s-1}{\rho},\qquad\mbox{and}\qquad N_2 \le \rho(s^2-1)-(s-1).
\end{equation*}
\item[$(i)$] If $\rho\nmid (s-1)$, then
\begin{equation*}
N_1 \le s^2-\left\lceil s/\rho\right\rceil,\qquad\mbox{and}\qquad
N_2 \le \rho(s^2-\left\lceil s/\rho\right\rceil).
\end{equation*}
\end{itemize}
\end{corollary}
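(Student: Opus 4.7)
The plan is to derive both cases directly by specialising the general results of this section to $r=\rho s$. With this choice $\gcd(r,s)=s$, so the quantities introduced before equation \eqref{Moore-odd} become $r/\gcd(r,s)=\rho$ (consistent with the notation of the statement) and $\sigma = s/\gcd(r,s)=1$. All the work then reduces to careful arithmetic in the two disjoint cases depending on whether $\rho$ divides $s-1$.

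For case $(i)$, with $\rho\mid (s-1)$, the hypothesis of Proposition \ref{nonupperbound} is satisfied, so \eqref{N1-N2-(d=3)} applies directly. Substituting $r=\rho s$ and $\sigma=1$ gives
\begin{equation*}
N_2 \le 1+s(\rho s-1)-\rho = \rho(s^2-1)-(s-1),
\end{equation*}
and
\begin{equation*}
N_1 \le \bigl(1+s(\rho s-1)-\rho\bigr)\frac{1}{\rho} = s^2 - 1 -\frac{s-1}{\rho},
\end{equation*}
which is exactly the bound claimed, noting that $(s-1)/\rho$ is an integer by hypothesis.

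For case $(ii)$, when $\rho\nmid(s-1)$, Proposition \ref{nonupperbound} does not apply, and one uses the inequalities \eqref{N1-N2(d=3)not-divide} instead. Again plugging in $\gcd(r,s)=s$ and $\sigma=1$, and factoring $s^2$ out of the floor, the bound for $N_1$ becomes
\begin{equation*}
N_1 \le \Bigl\lfloor s^2 - \tfrac{s-1}{\rho}\Bigr\rfloor = s^2 - \Bigl\lceil \tfrac{s-1}{\rho}\Bigr\rceil,
\end{equation*}
and similarly $N_2 \le \rho\bigl(s^2-\lceil (s-1)/\rho\rceil\bigr)$. The only non-routine step is to verify the identity
\begin{equation*}
\Bigl\lceil\frac{s-1}{\rho}\Bigr\rceil = \Bigl\lceil\frac{s}{\rho}\Bigr\rceil
\qquad\text{whenever }\rho\nmid(s-1).
\end{equation*}
This is the main (very small) obstacle. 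To handle it, write $s-1=q\rho+t$ with $0\le t<\rho$; the assumption $\rho\nmid(s-1)$ means $1\le t\le \rho-1$, so $s=q\rho+(t+1)$ with $1\le t+1\le \rho$. In both expressions the ceiling is therefore $q+1$, establishing the identity. Substituting it into the previous inequalities yields the statement of case $(ii)$.
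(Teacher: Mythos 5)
Your proposal is correct and follows essentially the same route as the paper: specialise \eqref{N1-N2-(d=3)} (case $(i)$) and \eqref{N1-N2(d=3)not-divide} (case $(ii)$) to $r=\rho s$, where $\gcd\{r,s\}=s$ and $\sigma=1$. The only difference is that you explicitly verify the identity $\lceil (s-1)/\rho\rceil=\lceil s/\rho\rceil$ under $\rho\nmid(s-1)$, which the paper leaves as an unproved "equivalence"; that added detail is welcome and correct (note only that your range $1\le t+1\le\rho$ can be sharpened to $2\le t+1\le\rho$, which does not affect the conclusion).
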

\begin{proof}
Note that, under the hypothesis, $\gcd\{r,s\}=s$, $\rho=\frac{r}{s}$, and $s=1$.
Then, $(i)$ follows from \eqref{N1-N2-(d=3)}.
Concerning $(ii)$, the values in \eqref{N1-N2(d=3)not-divide} become
$$
N_1\le \left\lfloor s^2-\frac{s-1}{\rho}\right\rfloor\qquad\mbox{\rm and}\qquad
N_2\le \left\lfloor s^2-\frac{s-1}{\rho}\right\rfloor\rho,
$$
which are expressions equivalent to those given above.
\end{proof}

Assuming that $\rho=2$ and $s$ is odd, we get the following consequence.

\begin{corollary}
\label{coro:2s-s}
There is no $[2s,s;3]$-bimoore graph for $s$ odd.
\label{coro:optimal}
\end{corollary}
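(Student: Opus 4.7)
The plan is to apply Proposition \ref{nonupperbound} directly, after verifying its divisibility hypothesis for the specific parameters $(r,s)=(2s,s)$ with $s$ odd.

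First, I would compute the auxiliary quantities $\rho$ and $\sigma$ that appear in Proposition \ref{nonupperbound}. Since $\gcd\{2s,s\} = s$, we have $\rho = (2s)/s = 2$ and $\sigma = s/s = 1$. Next, I would verify the divisibility condition $\rho \mid (s-1)$: if $s$ is odd, then $s-1$ is even, so $2 \mid (s-1)$, i.e., $\rho \mid (s-1)$ as required. By Proposition \ref{nonupperbound}, no $[2s,s;3]$-graph can attain the Moore-like bound \eqref{Moore-odd}; equivalently, since a bimoore graph is by definition one whose order equals $M(r,s;d)$, no $[2s,s;3]$-bimoore graph exists when $s$ is odd.

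There is no real obstacle here, as the corollary is simply the specialization of Proposition \ref{nonupperbound} to $\rho=2$. As a consistency check, for $s=3$ the original bound \eqref{Moore-odd} gives $M(6,3;3) = \lfloor 16/2\rfloor\cdot 3 = 24$, which is unattainable by the statement just proved, while the improved bound \eqref{Moore-odd-improved(d=3)} gives $M^{*}(6,3;3) = (1+3\cdot 5-2)\cdot\tfrac{3}{2} = 21$, realized by the unique graph in Figure \ref{3unics}$(c)$. Thus the corollary, together with Proposition \ref{nonupperbound}, is exactly what is needed to conclude that the $21$-vertex $[6,3;3]$-graph attains the maximum possible order.
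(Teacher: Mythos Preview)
Your proof is correct and follows the same approach as the paper: both apply Proposition~\ref{nonupperbound} after noting that $\gcd\{2s,s\}=s$ gives $\rho=2$, which divides $s-1$ when $s$ is odd. Your argument is in fact more explicit about the general odd~$s$ case, whereas the paper's proof essentially illustrates only the instance $s=3$ (deriving $M(6,3;3)=24$ unattainable and the improved bound $21$), leaving the general verification implicit.
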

\begin{proof}
From the first statement, the Moore bound $M(6,3;3)=24$ given in \eqref{Moore-odd} is not attained. With this bound, we have $N_2'=16$ vertices of degree $3$, and $N_1'=8$ vertices of degree $6$. So, the first possible values are $N_2=14$ vertices of degree $3$ and $N_1=7$ vertices of degree $6$, which corresponds to the graph depicted in Figure \ref{3unics}$(c)$.
\end{proof}

\subsection{The general case}

Proposition \ref{nonupperbound} can be extended for any odd diameter, as shown in the following result.

\begin{theorem}\label{nonupperbound2m+1}
If $\rho=\frac{r}{\gcd\{r,s\}}$ divides $s\frac{[(r-1)(s-1)]^{m}-1}{(r-1)(s-1)-1}-1$, then there is no $[r,s;2m+1]$-graph with order attaining the Moore-like bound in \eqref{Moore-odd}. Instead, the new improved Moore bound is
\begin{equation}
\label{Moore-odd-improved}
M^*(r,s;2m+1)= \left(\frac{1+s(r-1)\frac{[(r-1)(s-1)]^{m}-1}{(r-1)(s-1)-1}}{\rho}-1\right) (\rho+\sigma),
\end{equation}
where, as before, $\rho=\frac{r}{\gcd\{r,s\}}$ and $\sigma=\frac{s}{\gcd\{r,s\}}$.
\end{theorem}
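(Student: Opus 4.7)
The plan is to generalise the argument used for the case $m=1$ in Proposition~\ref{nonupperbound}. Setting $P=\frac{[(r-1)(s-1)]^{m}-1}{(r-1)(s-1)-1}$ so that $N_2'=1+s(r-1)P$ in \eqref{N2-odd}, and noting that $r\equiv 0\pmod{\rho}$ gives $N_2'\equiv 1-sP\pmod{\rho}$, the hypothesis ``$\rho\mid sP-1$'' is equivalent to ``$\rho\mid N_2'$''. Under this equivalent condition the floor in \eqref{Moore-odd} vanishes, so the Moore bound becomes $(N_2'/\rho)(\rho+\sigma)$. Combined with the edge-count identity $\rho N_1=\sigma N_2$ from \eqref{basic=} and the tree bound $N_2\le N_2'$, any graph attaining \eqref{Moore-odd} is forced to have exactly $N_2=N_2'$ and $N_1=(\sigma/\rho)N_2'$.

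Suppose, for contradiction, that such a $[r,s;2m+1]$-graph $G$ exists and pick any $v\in V_2$. Since distances within $V_2$ are even and the odd diameter is $2m+1$, the BFS from $v$ to depth $2m$ already visits every one of the $N_2'=|V_2|$ vertices of $V_2$. On the other hand the maximum number of distinct $V_2$-vertices such a BFS can reach is the sum of the ideal even-level sizes of the Moore tree, again equal to $1+s(r-1)P=N_2'$. Equality of the two counts forces the ideal tree to be realised without any collisions, which is equivalent to saying that no cycle of length at most $4m$ passes through $v$: a cycle $C$ of length $2k$ with $2\le k\le 2m$ through $v$ would determine a vertex $w\in V_2\cap C$ at distance $2j$ from $v$ along $C$ with $\max(1,k-m)\le j\le\min(k-1,m)$ (always feasible for $k\le 2m$), yielding two distinct paths of lengths $2j$ and $2k-2j$ from $v$ to $w$, both of length at most $2m$, i.e.\ a BFS collision.

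Since every cycle in a bipartite graph meets $V_2$, applying the above to all $v\in V_2$ gives $g(G)\ge 4m+2$. But $G$ attains the Moore bound \eqref{Moore-odd} by assumption, so Lemma~\ref{g<=4m} forces $g(G)\le 4m$, a contradiction. Hence $N_2<N_2'$, and the divisibility $\rho\mid N_2$ (from $\rho N_1=\sigma N_2$ with $\gcd(\rho,\sigma)=1$) sharpens this to $N_2\le N_2'-\rho$; combining with $N_1=(\sigma/\rho)N_2$ and summing yields $(N_2'/\rho-1)(\rho+\sigma)$, which is exactly \eqref{Moore-odd-improved}. The main technical step is the combinatorial claim that reaching all of $V_2$ via a BFS of depth $2m$ forces the absence of cycles of length at most $4m$ through the root; this is the uniform generalisation of the 4-cycle argument already used for Proposition~\ref{nonupperbound} and relies on the cycle-splitting recipe described above.
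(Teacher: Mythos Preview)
Your proof is correct and follows essentially the same approach as the paper's: show that under the divisibility hypothesis the floor in \eqref{Moore-odd} disappears, so attaining the bound forces $N_2=N_2'$; deduce girth $>4m$; and contradict Lemma~\ref{g<=4m}. The paper's own proof is extremely terse (``the same reasoning as in Proposition~\ref{nonupperbound}''), whereas you spell out each step in detail --- in particular your cycle-splitting argument (choosing $w\in V_2\cap C$ at cycle-distance $2j$ with $\max(1,k-m)\le j\le\min(k-1,m)$) makes explicit exactly why $N_2=N_2'$ rules out any cycle of length $\le 4m$ through a $V_2$-vertex, which the paper leaves implicit.
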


\begin{proof}
The proof that the bound in \eqref{Moore-odd} is not attainable follows the same reasoning as in Proposition \ref{nonupperbound}.
Indeed, from the hypothesis, if there exists a $[r,s;2m+1]$-graph $G=(V_1\cup V_2,E)$ with order attaining the upper bound in \eqref{Moore-odd}, we would have
$N_2=N_2'$, and the girth would be larger than $4m$, a contradiction.
Then, as before, the new possible bounds in \eqref{N1-N2-odd} are
$$
N_1\le \left( \frac{N_2'}{\rho}-1\right) \sigma\qquad\mbox{and}
\qquad N_2\le \left(\frac{N_2'}{\rho}-1\right) \rho,
$$
as claimed.	
\end{proof}

\begin{table}[t]
	\begin{center}
		\begin{tabular}{|c||c|c|c|c|c|c|c|c|c|}
			\hline
			$r\setminus s$ & 2   & 3   & 4    & 5    & 6     & 7    & 8    & 9    & 10     \\
			\hline \hline
			2              & 10                                                            \\ \cline{1-3}
			3              & $\textbf{15}^\diamond$  & 62                                                     \\ \cline{1-4}
			4              & 36  & $\stackrel{(112)}{105}$ & 242                                               \\ \cline{1-5}
			5              & 56  & 168 & 369  & 682                                        \\ \cline{1-6}
			6              & 80  & $\stackrel{(249)}{246}$ & $\stackrel{(535)}{530}$  & 957  & 1562                                \\ \cline{1-7}
			7              & 108 & 330 & 715 & $\stackrel{(1284)}{1272}$ & 2067  & 3110                        \\ \cline{1-8}
			8              & 140 & 429 & 924  & $\stackrel{(1651)}{1638}$ & 2646  & 3945 & 5602                \\ \cline{1-9}
			9              & 176 & 544 & $\stackrel{(1157)}{1144}$ & 2044  & 3280  & 4880 &  6885 & 9362          \\ \cline{1-10}
			10             & 216 & 663 & 1407 & $\stackrel{(2499)}{2496}$ & $\stackrel{(3976)}{3968}$  & 5882 & 8289 & 11229 & 14762 \\ 
			\hline
		\end{tabular}
	\end{center}
	\vskip-.25cm
	\caption{Best Moore bounds for diameter $d=5$. The diamond corresponds to unique graphs. The values between parenthesis correspond to old (unattainable)  bounds \eqref{Moore-odd}.
}
\label{tab:d=5}
\end{table}

Table \ref{tab:d=5} shows the values of the Moore bounds in \eqref{Moore-odd} and \eqref{Moore-odd-improved} for $s\le 2\le r\le 10$ and diameter $d=5$. As before, the values between parenthesis correspond to old (unattainable) bounds, as those in \eqref{Moore-odd}.

\subsection{Computational results}
The enumeration of bigraphs with maximum order can be done with a computer whenever the Moore bound is small enough. To this end, given a diameter $d$, first we generate with {\em Nauty} \cite{MP13} all bigraphs with maximum orders $N_1=|V_1|$ and $N_2=|V_2|$ allowed by the Moore bound $M(r,s;d)$. Second, we filter the generated graphs keeping those with diameter $d$ using the library {\em NetworkX} from Python. Computational resources forces to study the cases where $\max\{N_1,N_2\} \leq 24$ and $n=N_1+N_2 \leq 32$. Computational results are shown in Table \ref{tab:comput}. 
\begin{table}[t]
 \begin{center}
  \begin{tabular}{|c||c|c|c|c|c|}
   \hline
   $[r,s;d]$ & $n$ & $N_1$ & $N_2$ & Generated graphs & Graphs with diameter $d$ \\ \hline\hline
   $[4,3;3]$ & ${\bf 14}^\diamond$ & 6 & 8 & 18 & 1 \\ \hline
   $[5,3;3]$ & {\bf 16} & 6 & 10 & 45 & 2 \\ \hline
   
   $[6,3;3]$ & 24 & 8 & 16 & 977278 & 0 \\
             & ${\bf 21}^\diamond$ & 7 & 14 & 7063 & 1 \\ \hline
   $[7,3;3]$ & {\bf 20} & 6 & 14 & 344 & 4 \\ \hline
   $[8,3;3]$ & {\bf 22} & 6 & 16 & 950 & 10 \\ \hline
   $[9,3;3]$ & 32 & 8 & 24 & $>$122996904 & ? \\
             & 28 & 7 & 21 & 2262100 & 1 \\ \hline

   $[10,3;3]$ & {\bf 26} & 6 & 20 & 6197 & 19 \\ \hline
   $[11,3;3]$ & {\bf 28} & 6 & 22 & 14815 & 16 \\ \hline
   $[5,4;3]$ & 27& 12 & 15 & 822558 & 0 \\ 
   & {\bf 18}  & 8 & 10 & 3143 & 583 \\ \hline
    $[3,2;4]$ & ${\bf 15}^\diamond$ & 6 & 9 & 6 & 1 \\ \hline
    $[4,2;4]$ & ${\bf 24}^\diamond$ & 8 & 16 & 204 & 1 \\ \hline    
   $[3,2;5]$ & 20 & 8 & 12 & 20 & 0 \\ 
   & ${\bf 15}^\diamond$  & 6 & 9 & 6 & 1 \\ \hline
   
  \end{tabular}
 \end{center}
	\vskip-.25cm
	\caption{Complete enumeration of bipartite biregular Moore graphs for some (small) cases of the degrees $r,s$ and diameter $d$.}\label{tab:comput}
\end{table}

Even with the computational limitations on the order of the sets $V_1$ and $V_2$, some of the optimal values given in Tables \ref{tab:d=4}, \ref{tab:d=3}, and \ref{tab:d=5} have been found with this method. When there is no graph for the largest values $N_1$ and $N_2$, the following lower feasible pair values may decrease dramatically, producing a large number of optimal graphs, as it happens in the case $[5,4;3]$. The particular case $[9,3;3]$ is computationally very hard, and it is out of our computational resources (which are very limited), but our guess is that there is no Moore graph in this case. Finally, we point out that these results encourage us to study the case $s=d=3$ and $3 \nmid r$ more in detail (see Section \ref{sec:d=3}), where computational evidence shows that there is always a Moore graph.

\section{Bipartite biregular Moore graphs from generalized polygons}
\label{gen-pols}

In the first part of this section, we recall the connection between Moore graphs and generalized polygons that was extensively studied (see, for instance, Bamberg, Bishnoi, and Royle \cite{BamBisRoy}) because we will use it for the rest of the paper. In fact, our first result is an immediate consequence of the result proved by Araujo-Pardo, Jajcay, and Ramos in  \cite{AraJajRam}, and the analysis given in the introduction about the coincidence of the bounds for bipartite biregular cages and bipartite biregular Moore graphs when $d$ is even.
	
	\begin{theorem}\cite{AraJajRam}
		Whenever a generalized quadrangle, hexagon, or octagon 
		${\mathcal G}$ of order 
		$(s,t)$ exists, its point-line incidence graph is an $(s+1,t+1;8)$-,
		$(s+1,t+1;12)$- or $(s+1,t+1;16)$-cage, respectively.
		
		Hence, there exist infinite families of bipartite biregular 
		$(n+1,n^2+1;8)$-, $(n^2+1,n^3+1;8)$-, $(n,n+2;8)$-, $(n+1,n^3+1;12)$- and $(n+1,n^2+1;16)$-cages.
	\end{theorem}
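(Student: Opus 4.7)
The plan is to derive both assertions directly from the defining combinatorial properties of generalized polygons, combined with the observation noted in the introduction (following \cite{FilRamRivJaj19,YFF83}) that, when $g=2d$ and $d$ is even, the lower bound for bipartite biregular $(r,s;g)$-cages coincides with the Moore-like upper bound \eqref{Moore-even}. Recall that a generalized $n$-gon of order $(s,t)$ is a point-line incidence geometry whose bipartite incidence graph has diameter $n$ and girth $2n$, in which every line contains $s+1$ points and every point lies on $t+1$ lines. Consequently, its incidence graph is bipartite biregular with degrees $(s+1,t+1)$, and for $n=4,6,8$ both the diameter $n$ and the girth $2n$ are even, so the cage-versus-Moore coincidence applies.

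For the first statement, I would run the standard breadth-first counting in the incidence graph. Starting from any vertex $u$ in either part, the girth $2n$ forces all vertices within distance $n-1$ of $u$ to be pairwise distinct (any identification would create a cycle of length at most $2n-2$), so the tree counts \eqref{N2-even} and \eqref{N1-even} with $m=n/2$ are attained with equality. Hence the incidence graph simultaneously meets the bipartite biregular Moore bound for diameter $n$ and the cage bound for girth $2n$. Specialising to $n=4,6,8$ yields the three claimed $(s+1,t+1;8)$-, $(s+1,t+1;12)$- and $(s+1,t+1;16)$-cages.

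For the infinite families, I would invoke the classical constructions of generalized polygons available for every prime power $n$: the elliptic quadrangle $Q(5,n)$ (equivalently, the hermitian quadrangle $H(3,n^2)$) provides generalized quadrangles of order $(n,n^2)$, giving the $(n+1,n^2+1;8)$-cages; the hermitian quadrangle $H(4,n^2)$ provides order $(n^2,n^3)$, giving the $(n^2+1,n^3+1;8)$-cages; the Ahrens-Szekeres (Tits) construction provides quadrangles of order $(n-1,n+1)$, giving the $(n,n+2;8)$-cages; the twisted triality hexagon ${}^3D_4(n)$ yields generalized hexagons of order $(n,n^3)$, giving the $(n+1,n^3+1;12)$-cages; and the Ree-Tits octagon ${}^2F_4(n)$, for $n$ an odd power of $2$, yields generalized octagons of order $(n,n^2)$, giving the $(n+1,n^2+1;16)$-cages.

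Since the result is attributed to \cite{AraJajRam}, the argument really just packages well-known structural facts about generalized polygons into the language of Moore/cage bounds, and the only subtlety is justifying the equality case of the BFS counting; this is exactly what the girth-$2n$ axiom of a generalized $n$-gon guarantees. The parameter-matching to the classical infinite families above is then a routine book-keeping step that closes out the proof.
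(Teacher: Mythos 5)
Your proposal is correct, but note that the paper itself gives no proof of this statement: it is imported verbatim from \cite{AraJajRam} as a citation, and the only supporting material in the text is Lemma \ref{VMLemma} (incidence graphs of generalized $n$-gons have diameter $n$ and girth $2n$), Theorem \ref{PL} (the orders of the point and line sets), and the introductory remark that for even $d$ the cage lower bound at girth $g=2d$ coincides with the Moore upper bound \eqref{Moore-even}. Your reconstruction is the natural argument and matches these ingredients: girth $2n$ makes the BFS tree counts exact, and you should just make explicit that diameter $n$ together with bipartiteness is what guarantees that \emph{every} vertex of the opposite part lies within distance $n-1$ of the root, so that the tree exhausts that part and equality in \eqref{N2-even}--\eqref{N1-even} follows (attaining the common cage/Moore value then certifies minimality for the girth, i.e.\ the cage property). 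Alternatively, one can bypass the BFS entirely and read the order directly from Theorem \ref{PL}, checking that $|{\cal P}|+|{\cal L}|=(s+t+2)(1+st)$, etc., equals \eqref{Moore-even}; that is closer in spirit to what the paper provides. Your list of classical generalized polygons realizing the five parameter families ($Q(5,n)$/$H(3,n^2)$, $H(4,n^2)$, Ahrens--Szekeres, the twisted triality hexagon, and the Ree--Tits octagon) is accurate and supplies the existence claims that the paper leaves to the cited reference.
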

	
	Then, immediately we conclude  the following result.
	\begin{theorem}
		\label{bb-Moore}
		There exists infinite families of bipartite biregular
		$[r^2+1,r+1;4]$-, $[r^3+1,r^2+1;4]$-, $[r+2,r;4]$-, $[r^3+1,r+1;6]$- and $[r^2+1,r+1;8]$-bimoore graphs.
	\end{theorem}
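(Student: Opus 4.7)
The plan is to derive each of the five families directly from the cage theorem quoted just above, together with the observation (already emphasised in the introduction, following Filipovski, Ramos-Rivera and Jajcay \cite{FilRamRivJaj19}) that, when the diameter $d$ is even, the bipartite biregular cage lower bound for girth $g=2d$ coincides with the Moore upper bound $M(r,s;d)$ from \eqref{Moore-even}. All three girths appearing in the cage theorem, namely $8$, $12$ and $16$, are of the form $2d$ with $d\in\{4,6,8\}$ even, so this coincidence applies in every case.

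Concretely, I would fix one family — say the $(r+1,r^2+1;8)$-cages — and check that each associated graph $G$ is a $[r^2+1,r+1;4]$-bimoore graph; the other four cases will go identically. Being bipartite with girth $2d$, the standard inequality $\mathrm{diam}(G)\ge g/2$ in bipartite graphs gives $\mathrm{diam}(G)\ge d$. Conversely, since $|V(G)|$ equals the cage lower bound, which by the coincidence above equals $M(r,s;d)$, the tree-counting arguments that yield \eqref{N2-even} and \eqref{N1-even} must saturate; hence every vertex of $V_i$ reaches every vertex of $V_{3-i}$ within $d-1$ steps, so $\mathrm{diam}(G)\le d$. Combining, $\mathrm{diam}(G)=d$ and $|V(G)|=M(r,s;d)$, so $G$ is a bimoore graph of the prescribed parameters. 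Reading off the remaining cage parameters (setting $n=r$) then gives, in succession, the $[r^3+1,r^2+1;4]$-, $[r+2,r;4]$-, $[r^3+1,r+1;6]$- and $[r^2+1,r+1;8]$-bimoore families, and the infinitude of each one is inherited from that of the underlying infinite family of generalised polygons.

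The only mildly delicate step is the diameter upper bound $\mathrm{diam}(G)\le d$, since the cage property by itself guarantees only girth and order, not directly diameter. However, this step is essentially a restatement of the tree-counting that defines $M(r,s;d)$: saturation in \eqref{N2-even}--\eqref{N1-even} forces the trees rooted at any vertex of $V_1$ or $V_2$ to cover the opposite part within $d-1$ levels. So I do not anticipate a real obstacle beyond citing the coincidence of the two bounds and the bipartite inequality between girth and diameter.
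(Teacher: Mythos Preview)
Your proposal is correct and follows essentially the same route as the paper, which derives the result ``immediately'' from the cage theorem of \cite{AraJajRam} together with the coincidence, for even $d$, of the bipartite biregular cage lower bound at girth $2d$ and the Moore bound $M(r,s;d)$. You supply more detail than the paper does --- in particular the verification that the diameter is exactly $d$ --- and your argument for this is sound (girth $2d$ forces the depth-$(d-1)$ trees to have no repeated vertices, so they exhaust the opposite part when the order equals $M(r,s;d)$); the paper could alternatively read off the diameter directly from Lemma~\ref{VMLemma}, but your route via the cage data is equally valid.
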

	
	In the following, we give some results related to generalized $n$-gons that we use in the rest of the paper. 
	
	\begin{lemma}[\cite{VM98}, Lemma 1.3.6]\label{VMLemma}
		A geometry $\Gamma = (\cal{P},\cal{L},{\bf I})$ is a (weak) generalized $n$-gon if and
		only if the incidence graph of $\Gamma$ is a connected bipartite graph of diameter $d=n$ and
		girth $g=2d$, such that each vertex is incident with at least three (at least two) edges.
	\end{lemma}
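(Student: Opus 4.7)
The plan is to establish the biconditional by translating the three defining axioms of a (weak) generalized $n$-gon into the graph-theoretic conditions on the incidence graph $G$. Recall the standard axioms for a generalized $n$-gon $\Gamma=(\mathcal{P},\mathcal{L},\mathbf{I})$: (i) $\Gamma$ contains no ordinary $k$-gon for $2\le k<n$; (ii) any two elements of $\mathcal{P}\cup\mathcal{L}$ lie on a common ordinary $n$-gon; (iii) there exists an ordinary $(n+1)$-gon. The (weak) thickness hypothesis becomes the minimum-degree condition on $G$, and ordinary $k$-gons in $\Gamma$ correspond bijectively to $2k$-cycles in $G$. The whole proof then amounts to matching each geometric axiom with its graph counterpart in both directions.

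For the forward direction, assume $\Gamma$ satisfies (i)--(iii). Bipartiteness of $G$ is immediate from the point--line partition, while connectivity and the bound $\mathrm{diam}(G)\le n$ both follow from (ii), since any two vertices lie on a common $2n$-cycle. The reverse inequality $\mathrm{diam}(G)\ge n$ comes from (iii): two suitably chosen opposite vertices of an ordinary $(n+1)$-gon cannot be joined in $G$ by a path of length less than $n$ without producing an ordinary $k$-gon with $k<n$, which is forbidden by (i). The girth equals $2n$ because a shorter even cycle of length $2k$ would itself be an ordinary $k$-gon with $k<n$, again contradicting (i), while (ii) supplies $2n$-cycles. Finally, the degree lower bound transfers directly from the (weak) thickness hypothesis on $\Gamma$.

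For the converse, let $G$ be a connected bipartite graph of diameter $n$ and girth $2n$ with the prescribed minimum degree, and define $\Gamma$ via the bipartition into points and lines. Axiom (i) is immediate, as any ordinary $k$-gon in $\Gamma$ would give a $2k$-cycle in $G$, which the girth forbids for $k<n$. For axiom (ii), take two elements $x,y$ and a shortest $xy$-path $P$ of length $\ell\le n$; one then extends $P$ at its ends using the degree hypothesis to close it into a $2n$-cycle, invoking the girth to guarantee that no shortcut arises and that the closing walk is a genuine $2n$-cycle, i.e.\ an ordinary $n$-gon through $x$ and $y$. Axiom (iii) follows by picking any $2n$-cycle, choosing on it a vertex with a neighbour outside the cycle (provided by the minimum-degree assumption), and splicing that neighbour in together with a shortest path back to the cycle to produce an ordinary $(n+1)$-gon, once more with the girth ensuring that no accidental identification occurs.

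The main obstacle is the path-completion step in the backward direction: one must show that an arbitrary shortest path of length $\ell\le n$ between two prescribed elements can always be extended into a full ordinary $n$-gon. This is exactly where the tight balance $g=2d=2n$ is used: the minimum-degree hypothesis supplies enough neighbours to prolong the path at each step, while the girth condition forbids any premature closure that would produce a $2k$-cycle with $k<n$, thus forcing the extension to land on a genuine $2n$-cycle. Handling the boundary case $\ell=n$, where the two endpoints are already at maximal distance, and making the extension argument symmetric in both endpoints are the delicate points that deserve the most care.
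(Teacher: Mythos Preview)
The paper does not prove this lemma at all: it is quoted verbatim from van Maldeghem's monograph \cite{VM98} (Lemma~1.3.6 there) and used only as background for the subsequent discussion of generalized polygons. So there is no ``paper's own proof'' to compare against.

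Your outline is the standard route and is essentially correct. Two small remarks. First, your argument for $\mathrm{diam}(G)\ge n$ via axiom~(iii) is more circuitous than necessary: once you have $g\ge 2n$ from axiom~(i) and $\mathrm{diam}(G)\le n$ from axiom~(ii), the elementary inequality $g\le 2\,\mathrm{diam}(G)$ for bipartite graphs with a cycle forces $\mathrm{diam}(G)=n$ and $g=2n$ simultaneously. Second, in the backward direction you correctly flag the path-completion step (extending a shortest path of length $\ell\le n$ to a $2n$-cycle) as the crux, but what you have written is still only a plan: the actual argument requires showing that at each extension step the new vertex is genuinely new, which uses the girth bound together with a careful distance count, and the case $\ell=n$ needs the observation that both neighbours of an endpoint cannot lie on the existing path (else a short cycle appears). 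If you want a self-contained proof rather than a sketch, that step must be written out in full; otherwise your proposal is a faithful summary of how the proof in \cite{VM98} proceeds.
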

	As every generalized polygon $\Gamma$ can be associated with a pair $(r,s)$,
	called the {\em order} of $\Gamma$, such that every line is incident with $r+1$
	points, and every point is incident with $s+1$ lines (see van Maldeghem \cite{VM98}). This means,
	in particular, that the incidence graph of $\Gamma$ is a bipartite biregular graph
	with degrees $r+1$ and $s+1$. Besides, the following result
	determines the orders of both partite sets.
	
	\begin{theorem}[\cite{VM98}, Corollary 1.5.5]\label{PL}
		If there exists a $\Gamma = (\cal{P},\cal{L},{\bf I})$ (weak) generalized $n$-gon of order $(r,s)$ for $n \in \{3,4,6,8\}$, then
		
		\begin{itemize}
			\item
			For $n=3$,  $|{\cal{P}}|= r^2+r+1$\ and\ $|{\cal{L}}|=s^2+s+1$.
			\item
			For $n=4$,  $|{\cal{P}}|=(1+r)(1+rs)$\ and\ $|{\cal{L}}|=(1+s)(1+rs)$.
			\item
			For  $n=6$,  $|{\cal{P}}|=(1+r)(1+rs+r^2s^2)$\ and\  $|{\cal{L}}=(1+s)(1+rs+r^2s^2)$.
			\item
			For $n=8$,  $|{\cal{P}}|=(1+r)(1+rs)(1+r^2s^2)$\ and\ $|{\cal{L}}|=(1+s)(1+rs)(1+r^2s^2)$.
		\end{itemize}
		
	\end{theorem}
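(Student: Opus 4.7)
The plan is to translate the statement into a counting problem on the incidence graph $G$ of $\Gamma$ and then read off the vertex counts of each partite set. By Lemma \ref{VMLemma}, $G$ is connected bipartite with parts $\mathcal{P}$ and $\mathcal{L}$, diameter $d=n$, and girth $g=2n$; and because $\Gamma$ has order $(r,s)$, in $G$ each point has degree $s+1$ and each line has degree $r+1$, so $G$ is bipartite biregular.

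For the even cases $n=2m$ with $m\in\{2,3,4\}$, I would invoke the counting that underlies bounds \eqref{N2-even} and \eqref{N1-even}. The hypothesis $g=2d$ guarantees that the breadth-first-search tree from any root up to depth $d-1$ is injective, since two distinct paths of length at most $d-1$ sharing endpoints would combine into a closed walk of length less than $g=2d$ containing a cycle. In a bipartite graph of diameter $d$, every vertex of the opposite partite set of a given root lies at odd distance at most $d-1=2m-1$, so this injective BFS tree captures the entire opposite partite set, and the bounds \eqref{N2-even}, \eqref{N1-even} are met with equality. Writing $k_1=r+1$ and $k_2=s+1$ for the graph degrees of lines and points, one then obtains
\[
|\mathcal{P}|=k_1\,\frac{((k_1-1)(k_2-1))^m-1}{(k_1-1)(k_2-1)-1}=(r+1)\,\frac{(rs)^m-1}{rs-1},\quad |\mathcal{L}|=(s+1)\,\frac{(rs)^m-1}{rs-1},
\]
and the telescoping identities $\frac{(rs)^2-1}{rs-1}=1+rs$, $\frac{(rs)^3-1}{rs-1}=1+rs+r^2s^2$, and $\frac{(rs)^4-1}{rs-1}=(1+rs)(1+r^2s^2)$ recover exactly the formulas displayed for $n\in\{4,6,8\}$.

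The case $n=3$ is treated separately because the diameter is odd. Fix a point $p\in\mathcal{P}$; since $d=3$ and $G$ is bipartite, every other point lies at distance exactly $2$ from $p$, and since $g=6$ the depth-$2$ BFS tree from $p$ has no repeated vertices, giving $|\mathcal{P}|=1+(s+1)r$ (and, by the symmetric count from a line, $|\mathcal{L}|=1+(r+1)s$). A double count of incidences $|\mathcal{P}|(s+1)=|\mathcal{L}|(r+1)$ then simplifies to $rs(s-r)=0$, which forces $r=s$; substituting yields $|\mathcal{P}|=r^2+r+1$ and $|\mathcal{L}|=s^2+s+1$.

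The main obstacle, in my view, is articulating cleanly the equivalence between girth $g=2d$ and the exact attainment of the bipartite biregular Moore bound: the BFS-tree injectivity has to be paired carefully with the diameter hypothesis to ensure that no vertex of the opposite partite set is missed in the count. Once that is in place, the verifications for $n=4,6,8$ reduce to the telescoping factorizations above, and the $n=3$ case is dispatched by the double-counting identity that yields the parameter equality $r=s$.
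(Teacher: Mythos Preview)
The paper does not supply its own proof of this theorem; it is quoted from \cite{VM98} (Corollary~1.5.5) as a background result, so there is no in-paper argument to compare your proposal against.

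That said, your argument is correct and is the natural one given the tools already assembled in the paper. Translating via Lemma~\ref{VMLemma} to the incidence graph (bipartite biregular with degrees $r+1,s+1$, diameter $d=n$, girth $g=2n$) and then observing that $g=2d$ forces the Moore-type inequalities \eqref{N2-even}--\eqref{N1-even} to hold with equality is exactly right: any two distinct non-backtracking walks of length at most $d-1$ from a common root that reach the same vertex would produce a cycle of length at most $2(d-1)<g$. Substituting the incidence-graph degrees and using $\frac{(rs)^m-1}{rs-1}$ for $m=2,3,4$ then yields the displayed formulas for $n\in\{4,6,8\}$. For $n=3$, your BFS count gives $|\mathcal P|=1+r(s+1)$ and $|\mathcal L|=1+s(r+1)$, and the edge double count $|\mathcal P|(s+1)=|\mathcal L|(r+1)$ indeed reduces to $rs(s-r)=0$; since a (weak) generalized polygon of order $(r,s)$ has $r,s\ge 1$, this forces $r=s$ and recovers $|\mathcal P|=r^2+r+1$, $|\mathcal L|=s^2+s+1$. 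The only stylistic remark is that the equality $r=s$ for generalized triangles is a classical fact one could simply cite, but deriving it as you do keeps the argument self-contained.
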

	
	In 1964, Feit and Higman proved that finite generalized $n$-gons exist only for $n=\{3,4,6,8\}$. When $n=3$, we have the projective planes; when $n=4$, we have the generalized quadrangles, which are known to exist for parameter pairs
	$(q,q), (q,q^2), (q^2,q),(q^2,q^3), $
	$(q^3,q^2), (q-1,q+1),(q+1,q-1) $; when $n=6$, we have the generalized hexagons with parameters
	$ (q,q),(q,q^3),(q^3,q)$, in both cases for $q$ prime power; and, finally, for $n=8$, we have the generalized octagons, which are only known to exist for the pairs $(q,q^2),(q^2,q)$, where $q$ is an odd power of $2$.

\section{Two  general constructions}
\label{gen-cons}

In this section, we construct some infinite families of Moore or large semiregular bipartite graphs derived from two general constructions: the subdivision graphs and the semi-double graphs

%

\subsection{The subdivision graphs}
\label{sec:subdiv-graphs}

Given a graph $G=(V,E)$, its {\em subdivision graph} $S(G)$ is obtained by inserting a new vertex in every edge of $G$. So, every edge $e=uv\in E$ becomes two new edges, $ux$ and $xv$, with new vertex $x$ of degree two ($\deg(x)=2$).

In our context, we have the following result.

\begin{proposition}
\label{propo:subdiv}
Let $G=(V_1\cup V_2,E)$ be an $r$-regular bipartite graph with $n$ vertices, $m$ edges,
diameter $D(\ge 2)$, and spectrum
$\spec G=\{\lambda_0^{m_0},\lambda_1^{m_1},\ldots,\lambda_{d-1}^{m_{d-1}},\lambda_d^{m_d}\}$,
where $\lambda_i=-\lambda_{d-i}$ and $m_i=m_{d-i}$ for $i=0,\ldots,\lfloor d/2\rfloor$. Then, its
{\em subdivision graph} $S(G)$, is a bipartite biregular graph with degrees $(r,2)$,  $n+m$ vertices, $2m$ edges, has  diameter $2D$, and spectrum
\begin{equation}
\label{spec-double}
\spec S(G)= \pm\sqrt{\spec G+r} \cup \{0^{m-n}\}.
\end{equation}
\end{proposition}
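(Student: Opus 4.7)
The plan is to verify the four claims in turn; only the spectral identity requires more than routine bookkeeping. Subdividing each edge $e=uv\in E(G)$ replaces it by a $2$-path $u\sim x_e\sim v$, which immediately gives $|V(S(G))|=n+m$ and $|E(S(G))|=2m$. Since every edge of $S(G)$ is incident to exactly one subdivision vertex, $S(G)$ is bipartite with stable sets $V_1\cup V_2$ and $X:=\{x_e:e\in E\}$; each vertex of $V_1\cup V_2$ inherits its degree $r$ from $G$, and each $x_e\in X$ has degree $2$, proving biregularity with degrees $(r,2)$.

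For the diameter, every walk in $S(G)$ alternates strictly between original and subdivision vertices, so $d_{S(G)}(u,v)=2\,d_G(u,v)$ for $u,v\in V_1\cup V_2$. This forces $\mathrm{diam}\,S(G)\ge 2D$ and yields
$$
d_{S(G)}(u,x_e)=2\min\{d_G(u,u_1),d_G(u,v_1)\}+1,\qquad d_{S(G)}(x_e,x_f)=2\min_{\substack{a\in\{u_1,v_1\}\\ b\in\{u_2,v_2\}}}d_G(a,b)+2,
$$
for $e=u_1v_1$ and $f=u_2v_2$. Since $u_1v_1$ is an edge of the bipartite graph $G$, the two values $d_G(u,u_1)$ and $d_G(u,v_1)$ differ by exactly one, so their minimum is at most $D-1$; the same parity argument bounds the second minimum by $D-1$. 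Hence all mixed and subdivision-to-subdivision distances are at most $2D$, giving $\mathrm{diam}\,S(G)=2D$.

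For the spectrum, order the vertices with the $n$ original ones first. Then
$$
\A(S(G))=\begin{pmatrix}\O & \B\\ \B^{T} & \O\end{pmatrix}\quad\text{and}\quad \A(S(G))^{2}=\begin{pmatrix}\B\B^{T}&\O\\ \O&\B^{T}\B\end{pmatrix},
$$
where $\B$ is the $n\times m$ vertex--edge incidence matrix of $G$. The identity $\B\B^{T}=\A(G)+r\I$, valid for any $r$-regular graph, gives $\spec(\B\B^{T})=\{(\lambda_i+r)^{m_i}\}_{i=0}^{d}$, while $\B^{T}\B$ shares its non-zero spectrum with the same multiplicities and has a zero eigenspace of dimension $m-\rank\B=m-n+m_d$. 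Since $S(G)$ is bipartite, $\spec S(G)$ is symmetric about $0$ and is obtained from $\spec\A(S(G))^{2}$ by taking signed square roots, which yields \eqref{spec-double}. The main bookkeeping hazard is combining the $m_d$ zeros already present in $\B\B^{T}$ with the $m-n+m_d$ zeros of $\B^{T}\B$ into the $m-n$ extra zeros appearing in \eqref{spec-double}, under the convention that $\pm\sqrt{0}$ are counted separately.
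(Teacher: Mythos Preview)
Your argument is correct and, for the structural and diameter claims, follows essentially the same three-case analysis as the paper (old--old, old--new, new--new distances, using the parity of distances in a bipartite graph to bound the minimum by $D-1$).

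The one genuine difference is the spectral part. The paper simply quotes Cvetkovi\'{c}'s identity $\phi_{S(G)}(x)=x^{m-n}\phi_G(x^2-r)$ for the characteristic polynomial of the subdivision of an $r$-regular graph and reads off \eqref{spec-double} from it. You instead derive the spectrum directly: you square the block matrix $\A(S(G))$, invoke $\B\B^{\top}=\A(G)+r\I$, use that $\B\B^{\top}$ and $\B^{\top}\B$ share nonzero eigenvalues, and then split each positive eigenvalue of the square evenly between $\pm\sqrt{\cdot}$ via bipartiteness. This is a perfectly valid and self-contained route---in effect you are reproving Cvetkovi\'{c}'s formula in this special case---at the cost of the extra bookkeeping you flag at the end (the $2m_d$ zeros coming from $\lambda_d=-r$ must be folded into $\pm\sqrt{\spec G+r}$ so that only $m-n$ ``extra'' zeros remain). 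The paper's citation is terser; your argument is more transparent and requires no external reference.
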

\begin{proof}
Let $S(G)$ have partite sets $U_1$ and $U_2$, with $U_1=V_1\cup V_2$ (the old vertices of $G$), and $U_2$ being the new vertices of degree 2. Let $x_{uv}\in U_2$ denote the vertex inserted in the previous edge $uv$.
The first statement is obvious. To prove that the diameter of $S(G)$ is $2m$, we consider three cases:
\begin{itemize}
\item[$(i)$]
Since $G$ has diameter $d$, there is a path of length $\ell \le d$ from any vertex $u\in U_1$ to any other vertex $v\in U_1$, say $u_0(=u),u_1,u_2,\ldots,u_{\ell}$. This path clearly induces a path of length $2\ell$ in $S(G)$. Namely, $u_0(=u),x_{u_0u_1},u_1,x_{u_1u_2}u_2,\ldots,x_{u_{\ell-1}u_{\ell}}u_{\ell}$.
\item[$(ii)$]
Since $G$ is bipartite, there is a path of length $\ell\le d-1$ between any two vertices in the same partite set (if $d$ is odd) or in different partite sets (if $d$ is even). Thus, from a vertex $x_{u_1u_2}\in U_2$,  there is a path of length $2\ell+1\le 2d-1$ to any vertex $v\in U_1$. (This is because either $u_1$ or $u_2$ are at distance $\ell\le d-1$.)
\item[$(iii)$]
Finally, a path of length at most $2d$ between vertices $x_{u_1u_2},x_{v_1v2}\in U_2$ is obtained by considering first the path from $x_{u_1u_2}$ to $v_i$, for some $i\in\{1,2\}$ (which, according to $(ii)$, has length at most $2d-1$), together with the edge $v_ix_{v_1v2}$.
\end{itemize}
The result about the spectrum of $S(G)$ follows from a result by Cvetkovi\'{c}~\cite{Cv75}, who proved that, if $G$ is an  $r$-regular graph with $n$ vertices and $m\big(=\frac{1}{2}nr\big)$ edges, then the characteristic polynomials of $S(G)$ and $G$ satisfy
$\phi_{S(G)}(x) = x^{m-n} \phi_{G}(x^2-r)$.
\end{proof}

For instance, the $[2,r;4]$-Moore graphs proposed in Yebra, Fiol, and F\`abrega \cite{YFF83}, with $N_1=2r$ and $N_2=r^2$ can be obtained as the subdividing graphs $S(K_{r,r})$ (see the values in column of $s=2$ in Table \ref{tab:d=4}).

For larger diameters, we can use the same construction with the known Moore bipartite graphs, which correspond to the incidence graphs of generalized polygons with $r=s$.

\begin{proposition}
\label{bb(s=2,d=6)-Moore}
For any value of $r\ge 3$, with $r-1$ a prime power, there exist three infinite families of bimoore graphs with corresponding parameters
$[r,2;2m]$ for $m\in\{3,4,6\}$.
\end{proposition}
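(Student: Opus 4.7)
The plan is to apply the subdivision construction of Proposition \ref{propo:subdiv} to the point-line incidence graph of a generalized $n$-gon of order $(q,q)$, taking $q = r-1$ and $n\in\{3,4,6\}$. For these three values of $n$, infinite families of generalized $n$-gons of order $(q,q)$ are known to exist whenever $q$ is a prime power (projective planes, symplectic/orthogonal generalized quadrangles, and the split Cayley hexagons, respectively). By Lemma \ref{VMLemma}, such an incidence graph $G$ is a $(q+1)$-regular bipartite graph of diameter $D = n$ (and girth $2n$). Hence, by Proposition \ref{propo:subdiv}, the subdivision graph $S(G)$ is a bipartite biregular graph with degrees $(q+1,2) = (r,2)$ and diameter $2D = 2n$, which is precisely the diameter $2m$ for $m\in\{3,4,6\}$.

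The only thing left is to check that the order of $S(G)$ equals the Moore bound $M(r,2;2m)$ from \eqref{Moore-even}. Using Theorem \ref{PL} with $r=s=q$, the incidence graph $G$ has
\[
n_G = 2|{\cal P}|, \qquad m_G = \tfrac12 n_G (q+1) = (q+1)|{\cal P}|,
\]
so that $|V(S(G))| = n_G + m_G = (q+3)|{\cal P}|$. First I would specialize this to each of the three values of $m$: for $m=3$ the projective plane gives $|{\cal P}| = q^2+q+1$; for $m=4$ the quadrangle gives $|{\cal P}| = (1+q)(1+q^2)$; for $m=6$ the hexagon gives $|{\cal P}| = (1+q)(1+q^2+q^4)$. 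Then I would compare with
\[
M(r,2;2m) \;=\; (r+2)\,\frac{(r-1)^m - 1}{r - 2} \;=\; (q+3)\,\frac{q^m - 1}{q - 1},
\]
and use the factorizations $q^3-1 = (q-1)(q^2+q+1)$, $q^4-1 = (q-1)(q+1)(q^2+1)$, and $q^6-1 = (q-1)(q+1)(q^2+q+1)(q^2-q+1)$ together with the identity $(q^2+q+1)(q^2-q+1) = q^4+q^2+1$ to conclude that $|V(S(G))| = M(r,2;2m)$ in all three cases.

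The main (and essentially only) obstacle is this final bookkeeping step, since the geometric and combinatorial ingredients are all provided by the earlier sections: Proposition \ref{propo:subdiv} gives the diameter and degree sequence of $S(G)$, Lemma \ref{VMLemma} and Theorem \ref{PL} give the structure and vertex counts of the incidence graph, and the Feit–Higman classification explains why the construction is restricted to $m\in\{3,4,6\}$ (the missing case $m=8$ would require a generalized octagon of order $(q,q)$, which does not exist). No separate verification of biregularity or diameter is needed beyond what Proposition \ref{propo:subdiv} already supplies.
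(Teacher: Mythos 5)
Your proposal is correct and follows essentially the same route as the paper: the paper's proof likewise applies Proposition \ref{propo:subdiv} to the bipartite Moore graph of degree $r$ and diameter $m$ (the incidence graph of the generalized $m$-gon of order $(q,q)$, $q=r-1$) and matches the resulting order with $M(r,2;2m)=(r+2)\frac{(r-1)^m-1}{r-2}$. You merely spell out the order bookkeeping via Theorem \ref{PL} that the paper leaves implicit, and that bookkeeping checks out.
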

\begin{proof}
According to \eqref{Moore-even}, a bipartite biregular Moore graph with degrees $r$ and $2$ and diameter $d=2m$ has order $M(r,2;d)=\frac{r+2}{r-2}[(r-1)^m -1]$. Then, from Proposition \ref{propo:subdiv}, these are the parameters obtained when considering the subdividing graph $S(G)$ of a bipartite Moore graph $G$ of degree $r$ and diameter $m$. (For diameter $d=6$, see the values in the column $s=2$ of Table \ref{tab:d=6}.)
\end{proof}

\subsection{The semi-double graphs}
Let $G$ be a bipartite graph with stable sets $V_1$ and $V_2$.
Given $i\in \{1,2\}$, the {\em semi-double(-$V_i$)  graph} $G^{2V_i}$ is obtained from $G$ by doubling each vertex of $V_i$, so that each vertex $u\in V_i$ gives rise to another vertex $u'$ with the same neighborhood as $u$, $G(u')=G(u)$. Thus, assuming, without loss of generality, that $i=1$, the graph $G^{2V_1}$ is bipartite with stable sets $V_1\cup V_1'$ and $V_2$, and satisfies the following result.
\begin{theorem}
\label{th:semi-double}
Let $G=(V_1\cup V_2,E)$ be a bipartite graph on $n=n_1+n_2=|V_1|+|V_2|$ vertices, diameter $D(\ge 2)$, and spectrum
$\spec G$.
Then, its semi-double graph $G^{2V_1}$, on $N=2n_1+n_2$ vertices, has the same diameter $D$, and spectrum
\begin{equation}
\label{spec-semi-double}
\spec G^{2V_1}=\sqrt{2}\cdot \spec G \cup \{0^{n_1}\}.
\end{equation}
\end{theorem}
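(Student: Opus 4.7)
The vertex count $N=2n_1+n_2$ is immediate from the construction. For the diameter, I would exploit the natural projection $\pi\colon V(G^{2V_1})\to V(G)$ that sends each clone $u'$ back to its original $u$ and fixes all other vertices. Because cloning preserves neighborhoods, $\pi$ sends edges to edges, so every walk in $G^{2V_1}$ projects to a walk in $G$ of the same length; conversely, every walk in $G$ lifts to a walk in $G^{2V_1}$ by freely choosing, at each traversal of a vertex in $V_1$, either the original or its clone. The upper bound $\mathrm{diam}(G^{2V_1})\le D$ then splits into two cases: if $\pi(x)\ne\pi(y)$, lift a shortest $G$-path between $\pi(x)$ and $\pi(y)$; if $\pi(x)=\pi(y)$, so $\{x,y\}=\{u,u'\}$ for some $u\in V_1$, then any neighbor $v\in V_2$ of $u$ yields a path $u\sim v\sim u'$ of length $2\le D$. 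The reverse inequality is immediate, since $G$ sits as an induced subgraph of $G^{2V_1}$ and the projection argument forces distances between its vertices not to shrink.

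For the spectrum, the plan is to apply an orthogonal change of basis that decouples the two copies of $V_1$. Ordering vertices as $(V_1,V_1',V_2)$, the adjacency matrix has the block form
\[
A_{G^{2V_1}}=\begin{pmatrix} 0 & 0 & B \\ 0 & 0 & B \\ B^T & B^T & 0 \end{pmatrix},
\]
where $B$ is the $n_1\times n_2$ biadjacency matrix of $G$. Conjugating by the orthogonal matrix $\diag(U,I_{n_2})$, with $U=\tfrac{1}{\sqrt{2}}\bigl(\begin{smallmatrix} I & I \\ I & -I \end{smallmatrix}\bigr)$ passing to symmetric/antisymmetric coordinates on the two copies of $V_1$, produces, after a harmless reordering of blocks, a block-diagonal matrix whose antisymmetric block is the zero $n_1\times n_1$ matrix and whose remaining block is
\[
\begin{pmatrix} 0 & \sqrt{2}\,B \\ \sqrt{2}\,B^T & 0 \end{pmatrix}=\sqrt{2}\,A_G.
\]
Reading off the spectra of the two blocks yields $\spec G^{2V_1}=\sqrt{2}\,\spec G\cup\{0^{n_1}\}$, as claimed.

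The main point requiring care is the multiplicity bookkeeping around the eigenvalue zero: if $G$ itself already carries some zero eigenvalues, the extra $n_1$ zeros contributed by the antisymmetric subspace must be added on top of those rather than absorbed into them, and a direct case analysis separating zero from nonzero eigenvalues would have to track this explicitly. The orthogonal-similarity approach circumvents this obstacle, since it exhibits an honest orthogonal direct sum of two well-defined blocks, so that the characteristic polynomial factors cleanly as $x^{n_1}\det\!\bigl(xI-\sqrt{2}\,A_G\bigr)$.
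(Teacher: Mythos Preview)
Your proof is correct. The diameter argument is essentially the paper's, phrased a bit more carefully: the paper simply asserts that a shortest $G$-path between two original vertices remains shortest in $G^{2V_1}$, and handles the $u,u'$ case with the same length-$2$ path you describe; your projection map makes explicit why no shortcut through clones can exist.

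For the spectrum, your route differs from the paper's. The paper works eigenvector by eigenvector: given a $\lambda$-eigenvector $(\vv_1\,|\,\vv_2)^{\top}$ of $\A$, it checks directly that $(\vv_1\,|\,\vv_1\,|\,\sqrt{2}\,\vv_2)^{\top}$ is a $\sqrt{2}\lambda$-eigenvector of $\A^{[1]}$, adjoins the $n_1$ antisymmetric vectors supported on the clone pairs for the extra zeros, and then verifies that the resulting $2n_1+n_2$ vectors are linearly independent via a block rank computation. Your orthogonal conjugation by $\diag(U,I_{n_2})$ accomplishes the same thing in one stroke: it produces an honest block-diagonal matrix $\sqrt{2}\,\A_G\oplus 0_{n_1}$, so the characteristic polynomial factors as $x^{n_1}\det(xI-\sqrt{2}\,\A_G)$ and no separate independence check is needed. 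As you note, this also disposes of the zero-multiplicity bookkeeping automatically, whereas the paper's argument relies on the rank step to ensure the antisymmetric zero-eigenvectors are independent from any zero-eigenvectors already coming from $G$. Your approach is cleaner and generalizes immediately to the $k$-tuple version (Theorem~\ref{th:k-tuple}) by replacing $U$ with any $k\times k$ orthogonal matrix whose first column is $\tfrac{1}{\sqrt{k}}\mathbf{1}$.
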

\begin{proof}
Let $p : u_1(=u),u_2,\ldots,u_{\delta-1},u_{\delta}(=v)$ be a shortest path in $G$ between vertices $u$ and $v(\neq u')$, for $2\le \delta\le D$.
 If $u,v\in V_1\cup V_2$, $p$ also is a shortest path in $G^{2V_1}$. Otherwise, the following also are shortest paths in $G^{2V_1}$:
 \begin{align*}
  & u_1(=u),u_2,\ldots,u_{\delta-1},u_{\delta}'(=v');\\
  & u_1'(=u'),u_2,\ldots,u_{\delta-1},u_{\delta}(=v);\\
  & u_1'(=u'),u_2,\ldots,u_{\delta-1},u_{\delta}'(=v').
 \end{align*}
 Finally, the distance from $u$ to $u'$ is clearly two.
 \\
 To prove \eqref{spec-semi-double}, notice that the adjacency matrices $\A$ and $\A^{[1]}$ of $G$ and $G^{2V_1}$,  are
 $$
 \A=\left(
 \begin{array}{c|c}
 \bigzero & \N \\
 \hline \\
 [-.4 cm]
\N^{\top} & \bigzero
 \end{array}
 \right)\qquad \mbox{and}\qquad
 \A^{[1]}
 =\left(
 \begin{array}{c|c|c}
 \bigzero & \bigzero & \N \\
 \hline \\
 [-.4 cm]
 \bigzero & \bigzero & \N\\
 \hline \\
 [-.4 cm]
\N^{\top} & \N^{\top} & \bigzero
 \end{array}
 \right),
 $$
respectively, where $\N$ is an $n_1\times n_2$ matrix. Now, we claim that, if $\vv=(\vv_1 | \vv_2)^{\top}$ is a $\lambda$-eigenvector of $\A$, then $\vv^{[1]}=(\vv_1 | \vv_1| \sqrt{2}\vv _2)^{\top}$ is a $\sqrt{2}\lambda$-eigenvector of $\A^{[1]}$. Indeed, from
\begin{equation*}
 \A\vv=\left(
 \begin{array}{c|c}
 \bigzero & \N \\
 \hline \\
 [-.4 cm]
\N^{\top} & \bigzero
 \end{array}
 \right)
 \left(
 \begin{array}{c}
\vv_1\\
\hline
\vv_2
\end{array}
 \right)
 =\lambda \left(
 \begin{array}{c}
\vv_1\\
\hline
\vv_2
\end{array}
 \right)
 \qquad \Rightarrow \qquad \N\vv _2=\lambda \vv _1\quad{\rm and}\quad \N^{\top}\vv _1=\lambda \vv _2,
 \end{equation*}
we have
$$
 \A^{[1]}\vv^{[1]}=
 \left(
 \begin{array}{c|c|c}
 \bigzero & \bigzero & \N \\
 \hline \\
 [-.4 cm]
 \bigzero & \bigzero & \N \\
 \hline \\
 [-.4 cm]
\N^{\top} & \N^{\top} & \bigzero
 \end{array}
 \right)
\left(
\begin{array}{c}
\vv_1\\
\hline
\vv_1\\
\hline \\
 [-.45 cm]
\sqrt{2}\vv _2
\end{array}
 \right)=
 \left(
\begin{array}{c}
\sqrt{2}\N\vv _2\\
\hline
\sqrt{2}\N\vv _2\\
\hline \\
 [-.45 cm]
2\N^{\top}\vv _1
\end{array}
 \right)
\left(
\begin{array}{c}
\sqrt{2}\lambda \vv _1\\
\hline
\sqrt{2}\lambda \vv _1\\
\hline
2\lambda \vv _2
\end{array}
\right)
=\sqrt{2}\lambda \vv ^{[1]}.
$$
Moreover, if $\U=\left(
\begin{array}{c}
\U_1\\
\hline
\U_2
\end{array}
\right)$
is a matrix whose columns are the $n$ independent eigenvectors of $\A$, then the $n$ columns of the matrix
 $\U'=\left(
\begin{array}{c}
\U_1\\
\hline
\U_1\\
\hline \\
 [-.45 cm]
\sqrt{2}\U_2
\end{array}
\right)$
also are independent since, clearly, $\rank \U'=\rank \U=n_1+n_2$.
Consequently, $\spec \A\subset \spec \A^{[1]}$. Finally, each of the remaining $n_1$ eigenvalues $0$ corresponds to an eigenvector with $u$-th component $+1$ and $u'$-th component $-1$ for a given $u\in V_1$, and $0$ elsewhere. This is because the matrix $\U^{[1]}$ obtained by extending $\U'$  with such eigenvalues, that is,
 $$
 \U^{[1]}=\left(
\begin{array}{c|c}
\U_1 & \I \\
\hline
\U_1 & -\I \\
\hline \\
 [-.45 cm]
\sqrt{2}\U_2 & \bigzero
\end{array}
\right)
$$
has rank $n=2n_1+n_2$, as required.
\end{proof}

In general, we can consider the $k$-tuple graph $G^{kV_i}$, which is defined as expected by replacing each vertex $u\in V_i$ of $G$  by $k$ vertices $u_1,\ldots,u_k$ with the same adjacencies as $u$. Then, similar reasoning as in the proof of Theorem 
\ref{th:semi-double} leads to the following result.
\\
\begin{theorem}
\label{th:k-tuple}
Let $G=(V_1\cup V_2,E)$ be a bipartite graph on $n=n_1+n_2=|V_1|+|V_2|$ vertices, diameter $D(\ge 2)$, and spectrum
$\spec G$.
Then, its $k$-tuple graph $G^{kV_1}$, on $N=kn_1+n_2$ vertices, has the same diameter $D$, and spectrum
\begin{equation}
\label{spe-k-yuble}
\spec G^{kV_1}=\sqrt{k}\cdot \spec G \cup \{0^{(k-1)n_1}\}.
\end{equation}
\end{theorem}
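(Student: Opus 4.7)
The plan is to follow the template of Theorem~\ref{th:semi-double}, which handles the case $k=2$, and carry every ingredient over to general $k$. The diameter and spectrum claims are treated separately, by a direct lift of paths and a direct construction of eigenvectors, respectively.

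For the diameter, I would first embed $G$ into $G^{kV_1}$ as an induced subgraph by identifying each $u\in V_1$ with its first copy $u_1$, which gives $D(G^{kV_1})\ge D$. For the reverse inequality, I distinguish cases according to where two endpoints $x,y$ live. If $x$ and $y$ are not two distinct copies of the same $V_1$-vertex, any geodesic in $G$ between their projections onto $V_1\cup V_2$ lifts to a walk in $G^{kV_1}$ of the same length, by choosing the prescribed copies at the endpoints and (say) the first copy at any intermediate $V_1$-step. If instead $x=u_i$ and $y=u_j$ with $i\ne j$, then since the $k$ copies of $u$ share the same $V_2$-neighborhood and $u$ has at least one neighbor in $V_2$, any such neighbor gives a path $u_i\sim w\sim u_j$ of length $2\le D$.

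For the spectrum, I would write the adjacency matrix $\A^{[k]}$ of $G^{kV_1}$ in block form with $k$ diagonal blocks of size $n_1$ (one per copy of $V_1$) followed by a block of size $n_2$: all blocks inside the top-left $k\times k$ partition are zero, each $V_1$-copy to $V_2$ off-diagonal block is $\N$, and each $V_2$ to $V_1$-copy block is $\N^{\top}$. Given a $\lambda$-eigenvector $(\vv_1\mid\vv_2)^{\top}$ of $\A$, the lift
\[
\vv^{[k]}=(\vv_1\mid\vv_1\mid\cdots\mid\vv_1\mid\sqrt{k}\,\vv_2)^{\top}
\]
is a $\sqrt{k}\lambda$-eigenvector of $\A^{[k]}$: the first $k$ blocks produce $\N(\sqrt{k}\vv_2)=\sqrt{k}\lambda\vv_1$, and the last block produces $k\N^{\top}\vv_1=\sqrt{k}\lambda\cdot\sqrt{k}\vv_2$. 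This yields $n$ eigenvectors contributing the spectrum $\sqrt{k}\cdot\spec G$. The remaining $(k-1)n_1$ eigenvalues are $0$: for each $u\in V_1$ and each vector supported on the $k$ copies of $u$ whose entries on them sum to zero, the common $V_2$-neighborhood of these copies makes the vector lie in the kernel of $\A^{[k]}$; each $u$ contributes $k-1$ linearly independent such vectors.

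The main (and essentially only) obstacle is to verify linear independence of the whole family of $n+(k-1)n_1=kn_1+n_2=N$ vectors. I expect to handle it by noting that every lifted eigenvector lies in the ``symmetric'' subspace of vectors whose $V_1$-copy entries agree across the $k$ copies, while the additional kernel vectors lie in its orthogonal complement inside the $V_1$-block; arranging them as the columns of a matrix $\U^{[k]}$ analogous to the matrix $\U^{[1]}$ exhibited in the proof of Theorem~\ref{th:semi-double} then gives $\rank\U^{[k]}=N$ by an immediate block computation, which finishes the proof.
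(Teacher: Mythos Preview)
Your approach mirrors the paper's own, which simply states that the reasoning for Theorem~\ref{th:semi-double} carries over; your eigenvector lift $\vv^{[k]}$ and the zero-sum kernel vectors on the $k$ copies of each $u\in V_1$ are exactly the natural $k$-fold generalizations, and the independence argument via the symmetric/antisymmetric splitting of the $V_1$-block is correct.

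One small slip in the diameter part: embedding $G$ as an induced subgraph of $G^{kV_1}$ does \emph{not} by itself give $D(G^{kV_1})\ge D$, since distances in an ambient graph can be shorter than in an induced subgraph (think of a path inside a complete graph). What actually gives the lower bound is the retraction $\pi\colon G^{kV_1}\to G$ sending each copy $u_i$ to $u$ and fixing $V_2$; this is a graph homomorphism, so every path in $G^{kV_1}$ projects to a walk in $G$ of the same length, whence $\mathrm{dist}_{G^{kV_1}}(x,y)\ge \mathrm{dist}_G(\pi(x),\pi(y))$. Combined with your lifting argument for the upper bound, this yields $D(G^{kV_1})=D$.
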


As a consequence of Theorem \ref{th:semi-double}, we introduce a family of $[r,2r;d]$-graphs for $d=\{3,4,6\}$ using the existence of bipartite Moore graphs of order $M(r;d)$ for $r-1$ a prime power and these values of $d$. That is, the incidence graphs of the mentioned generalized polygons.

\begin{theorem}
\label{r2r}
The following are $[r,2r;d]$-biregular bipartite graphs for $r\geq 3$, $r-1$ a prime power, and diameter $d\in \{3,4,6\}$.
\begin{itemize}
\item [(i)]
A $[r,2r;3]$-biregular bipartite graph has order $n=3r^2-3r+3$ with defect $\delta=\frac{3}{2}(r-1)$ for odd $r$, and $\delta=\frac{3r}{2}-3$ for even $r$.
\item [$(ii)$]
A $[r,2r;4]$-biregular bipartite graph has order $n=3r^3-6r^2+10r$ with defect  $\delta=3r^3-3r^2-2r$.
\item [$(iii)$]
A $[r,2r;6]$-biregular bipartite graph has order $n=2r^5-8r^4+14r^3-12r^2+6r$ with defect  $\delta=10r^5-28r^4+31r^3-15r^2-3r$.
\end{itemize}
\end{theorem}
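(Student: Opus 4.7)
The plan is to apply Theorem~\ref{th:semi-double} directly to the bipartite $r$-regular Moore graphs of diameter $d\in\{3,4,6\}$, which by Section~\ref{gen-pols} exist whenever $r-1$ is a prime power: they are the incidence graphs of the finite projective plane, the generalized quadrangle of order $(q,q)$, and the generalized hexagon of order $(q,q)$, taken with $q=r-1$. Each such Moore graph is $r$-regular and balanced bipartite with partite sets of equal size $n_1$, so doubling one of them via Theorem~\ref{th:semi-double} yields a bipartite graph on $N=3n_1$ vertices whose parts have sizes $2n_1$ (still of degree $r$) and $n_1$ (now of degree $2r$), and whose diameter, by the same theorem, is still $d$. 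This settles the structural part of $(i)$--$(iii)$ uniformly.

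With the construction fixed, the orders come from reading $n_1$ off Theorem~\ref{PL}: for the projective plane $n_1=q^2+q+1$; for the generalized quadrangle $n_1=(q+1)(q^2+1)$; and for the generalized hexagon $n_1=(q+1)(q^4+q^2+1)$. Substituting $q=r-1$, multiplying by $3$, and expanding yields the polynomial expressions in the statement. The defects are then computed by subtracting these orders from the Moore bound for an $[r,2r;d]$-bigraph: \eqref{Moore-odd} for the odd diameter $d=3$, and \eqref{Moore-even} with $m=2$ (respectively $m=3$) for the even diameters $d=4$ (respectively $d=6$); each case is a direct polynomial subtraction.

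The one place where care is needed is the floor in \eqref{Moore-odd} for $d=3$. Putting the larger degree $2r$ into the role of the variable $r$ of that formula gives $\rho=2r/\gcd(2r,r)=2$ and $\sigma=1$, so the bound becomes $3\lfloor(2r^2-r+1)/2\rfloor$, and the parity of $r$ decides whether the numerator is even or odd. This is exactly what forces the defect to split into $\tfrac{3}{2}(r-1)$ for odd $r$ and $\tfrac{3r}{2}-3$ for even $r$, and the two subcases are checked by a short calculation. Beyond this parity split, the only remaining work is the clerical expansion of $(q+1)(q^2+1)$ and $(q+1)(q^4+q^2+1)$ in the variable $r$ and the subsequent polynomial subtraction against \eqref{Moore-even}; I expect no deeper obstacle.
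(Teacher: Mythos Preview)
Your proposal is correct and follows essentially the same approach as the paper: apply Theorem~\ref{th:semi-double} to the incidence graphs of the projective plane, generalized quadrangle, and generalized hexagon of order $q=r-1$, then read off the orders and subtract from the appropriate Moore bound. The parity split you identify for $d=3$ via the floor in \eqref{Moore-odd} is exactly how the paper handles it as well.
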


\begin{proof}
$(i)$ Let $G$ be a $[r,3]$-Moore graph, that is, the incidence graph of a projective plane of order $r-1$. As already mentioned, $G$ is bipartite, has $2r^2-2r+1$ vertices, and diameter $3$.
Then, by Theorem \ref{th:semi-double}, the semi-double graph $G^{[1]}$ (or $G^{[2]}$) is
a bipartite graph on $3r^2-3r+3$ vertices, biregular with degrees $r$ and $2r$, and diameter $3$.
Thinking on the projective plane, this corresponds to duplicate, for instance, each line, so that each point is on $2r$ lines, and each line has $r$ points, as before. Then,  $G^{[1]}$ is the incidence graph of this new incidence geometry ${\cal{I}}_G$.
%
Moreover, the Moore bound \eqref{Moore-even} is
$M(2r,r;3)=3r^2-\frac{3}{2}(r-1)$ if $r$ is odd, and $M(2r,r;3)=3r^2-\frac{3}{2}r$ if $r$ is even. Thus,  a simple calculation gives that the defect of $G$ is equal to $\delta=\frac{3}{2}(r-1)$ for $r$ odd and $\delta=\frac{3}{2}r-3$ for $r$ even.

Figure \ref{3unics}$(c)$ depicts the only $[6,3;3]$-biregular bipartite graph of order $21$ obtained from the Heawood graph (that is, the incidence graph of the Fano plane).
Notice that, by Corollary \ref{coro:2s-s}, this is a Moore graph since it has the maximum possible number of vertices (see Table \ref{tab:d=3}).


$(ii)$ In this case, we apply Theorem \ref{th:semi-double} to  $G$ being the $[r;4]$-Moore graph (that is, the incidence graph of a generalized quadrangle of order $r-1$). Now the semi-double graph $G^{[1]}$ is $[r,2r;4]$-bipartite biregular 
with $3[(r-1)^3+(r-1)^2+(r-1)+1]=3r^3-6r^2+10r$ vertices, whereas the Moore bound in  \eqref{Moore-even} is  $M(2r,r;3)=6r^3-9r^2+6r$. Hence, the defect is $\delta=3r^3-3r^2-2r$.
For example, the $[3,6;4]$-bigraph of order $45$ obtained from the Tutte graph (that is, the incidence graph of the generalized quadrangle of order $2$) has defect $\delta=54$.


$(iii)$ Finally, to obtain a  $[r,2r;6]$-bigraph of order $2r^5-8r^4+14r^3-12r^2+6r$, we apply Theorem \ref{th:semi-double} to the $[r;6]$-Moore graph (the incidence graph of a generalized hexagon of order $r-1$). Then, we obtain a $[r,2r;6]$-bigraph with
$2r^5-8r^4+14r^3-12r^2+6r$
vertices, whereas  the Moore bound in \eqref{Moore-even} is $M(2r,r;6)=12r^5-36r^4+45r^3-27r^2+9r$, yielding a defect $\delta=10r^5-28r^4+31r^3-15r^2-3r$.
\end{proof}

Notice that,  in Theorem \ref{r2r}, we only state results for Moore graphs constructed with regular generalized quadrangles; clearly, it is possible to do the same starting with biregular bipartite Moore graphs of diameters $6$ and $8$ given as in Theorem \ref{bb-Moore}, but since the bounds are far to be tight, we
restricted the details to the best cases.

\section{Bipartite biregular Moore graphs of diameter $3$}
\label{sec:d=3}

We have already seen some examples of large biregular graphs with diameter three. Namely, the Moore graphs with degrees $(3,4)$, $(3,5)$, and $(3,6)$ of Figure  \ref{3unics} (the last one in Theorem \ref{r2r}$(i)$).
Now, we begin this section with the simple case of bimoore graphs with degrees  $r$ (even)  and $2$, and diameter $3$. For these values, the Moore bounds in \eqref{Moore-odd} and \eqref{Moore-even} turn out to be $M(r,2;3)=2+r$ when $r(>1)$ is odd, and $M(r,2;3)=3\left(1+\frac{r}{2}\right)$ ($N_1=3$ and $N_2=3r/2$) when $r(>2)$ is even, respectively.
In the first case, the bound is attained by the complete bipartite graph $K_{2,r}$ and, hence, the diameter is, in fact, $2$.
In the second case, the Moore graphs are obtained via Theorem \ref{th:k-tuple}. Let $G$ be a hexagon (or $6$-cycle) with vertex set $V_1\cup V_2=\{u_1,u_3,u_5\}\cup \{2,4,6\} $. Then,  the $k$-tuple $G^{kV_1}$ with $k=r/2$ is a $[2,2r;3]$-bimoore graph on $3(r+1)$ vertices (see Table \ref{tab:d=3}).

In general, and in terms of designs, to guarantee that the diameter is equal to $3$, it is necessary that any pair of points shares a block, and any pair of blocks has a non-empty intersection.
It is well known that this kind of structure exists when $r=m$ and $r-1$ is a  prime power. Namely, the so-called projective plane of order $r-1$. In this case, any pair of blocks (or lines) intersects in one and only one point and, for any pair of points, they share one and only one block (or line).
 For more details about projective planes, you can consult, for instance, Coxeter \cite{Cox93}.  In our constructions of block designs giving graphs of diameter $3$, this condition is not necessary, and two blocks can be intersected in more than one point, and two points can share one or more blocks. Moreover, we may have the same  block appearing more than once.

\subsection{Existence of bipartite biregular Moore graphs of diameter 3 and degrees $r\ge 3$ and  $s=3$}
\label{sec:d=3}

Let us consider the diameter $3$ case together with $r>s=3$. According to Eqs. \eqref{N1-odd} and \eqref{N2-odd}, the maximum numbers of vertices for the partite sets, in this case, are $N_1'=2r+1$ and $N_2'=3r-2$. We recall that the equality $N_1'r=3N_2'$ does not hold when the diameter is odd. Since
$$
\rho=\frac{r}{\gcd\{r,3\}}=\left\{\begin{array}{cl}
                              r & \textrm{if} \ \ 3 \nmid r \\
                              \frac{r}{3} & \textrm{otherwise}
                             \end{array}\right.
\quad \textrm{and} \quad \sigma=\frac{3}{\gcd\{r,3\}}=\left\{\begin{array}{cl}
                              3 & \textrm{if} \ \ 3 \nmid r \\
                              1 & \textrm{otherwise}
                             \end{array}\right.
$$
we obtain the Moore-like bounds (see \eqref{N1-N2-odd}):
\[
N_1 \leq \left\lfloor \frac{3r-2}{\rho}\right\rfloor \sigma = \left\{\begin{array}{cl}
                              3\omega & \textrm{if} \ \ 3 \nmid r \\
                              \omega' & \textrm{otherwise}
                             \end{array}\right.
\quad \textrm{and} \quad N_2\le \left\lfloor \frac{3r-2}{\rho}\right\rfloor \rho = \left\{\begin{array}{cl}
                              r\omega & \textrm{if} \ \ 3 \nmid r \\
                              \frac{r\omega'}{3} & \textrm{otherwise}
                             \end{array}\right.
\]
where $\omega=\left\lfloor \frac{3r-2}{r}\right\rfloor=2$ and $\omega'=\left\lfloor \frac{3r-2}{r/3}\right\rfloor=8$. As a consequence, whenever $3 \nmid r$, we obtain Moore-like bounds $N_1\leq 6$ and $N_2 \leq 2r$. Otherwise, when $3|r$, we have
the bounds $N_1\leq 8$ and $N_2 \leq \frac{8}{3}r$.

The following graphs have orders that either attain or are close to such Moore bounds.
Given an integer $n\ge 6$, let $G_{6+n}=(V_1 \cup V_2, E)$ be the bipartite graph with independent sets $V_1=\{(0,j) \ | \ j \in \mathbb{Z}_6\}$, $V_2=\{(1,i) \ | \ i \in \mathbb{Z}_{n}\}$, and where $(1,i) \sim (0,j)$ for all $i \in \mathbb{Z}_{n}$ if and only if $j \equiv i \pmod{6}$ or $j \equiv i+1 \pmod{6}$ or $j \equiv i+3 \pmod{6}$.

Thus, $G_{6+n}$ is a bipartite graph with $n+6$ vertices, where every vertex $v\in V_2$ has degree $3$ and,  assuming that $n=6k+\rho$ ($n\equiv \rho \pmod{6}${\color{blue})}, vertex $u\in V_1$ has the degree indicated in Table  \ref{tab:degreesV1}.
Indeed, let us consider the case $\rho=2$ (the other cases are analogous), where $n=6k+2$ and, for simplicity, let  $V_2= \{0,1,\ldots,6k-1\}$. According to the adjacency rules,
the vertex $(0,i)\in V_1$, for $i\in \Z_6$, is adjacent to all the vertices $ j\in V_2$ with $j\equiv i,i-1,i-3 \pmod{6}$. Then, we have the following cases:
\begin{itemize}
\item
 If $i=0$, $V_2$ contains $k+1$ numbers $j\equiv 0 \pmod6$, $k$ numbers $j\equiv-1\equiv 5 \pmod6$, and $k$ numbers $j\equiv-3\equiv 3 \pmod6$. Thus, the degree of $(0,0)$ is $3k+1$.
\item
 If $i=1$, $V_2$ contains $k+1$ numbers $j\equiv 1 \pmod6$, $k+1$ numbers $j\equiv0 \pmod6$, and $k$ numbers $j\equiv-2\equiv 4 \pmod6$. Thus, the degree of $(0,2)$ is $3k+2$.
\item
 If $i=2$, $V_2$ contains $k$ numbers $j\equiv 2 \pmod6$, $k+1$ numbers $j\equiv1 \pmod6$, and $k$ numbers $j\equiv-1\equiv 5 \pmod6$. Thus, the degree of $(0,3)$ is $3k+1$.
\item[ \vv dots]
\item
 If $i=5$, $V_2$ contains $k$ numbers $j\equiv 5 \pmod6$, $k$ numbers $j\equiv4 \pmod6$, and $k$ numbers $j\equiv2 \pmod6$. Thus, the degree of $(0,5)$ is $3k$.
\end{itemize}
\begin{table}[h]
	\begin{center}
		\begin{tabular}{|c||c|c|c|c|c|c|}
			\hline
			$\rho\setminus u$ &  (0,0)   & (0,1) & (0,2)   & (0,3)    &  (0,4)     & (0,5)      \\
			\hline
			0              &  $3k$ & $3k$ & $3k$ & $3k$ & $3k$ & $3k$ \\
			1              & $3k+1$  & $3k+1$ & $3k$ & $3k+1$ &  $3k$ & $3k$ \\
			2              & {\boldmath  $3k+1$}  &  {\boldmath $3k+2$} &  {\boldmath $3k+1$} &  {\boldmath $3k+1$} &  {\boldmath $3k+1$} &  {\boldmath $3k$}   \\
			3               & $3k+1$  & $3k+2$ & $3k+2$ & $3k+2$ & $3k+1$ & $3k+1$   \\
			4              & {\boldmath $3k+2$}  & {\boldmath $3k+2$} & {\boldmath $3k+2$} & {\boldmath $3k+3$} & {\boldmath $3k+2$} & {\boldmath $3k+1$}   \\
			5              & $3k+2$  & $3k+3$ & $3k+2$ & $3k+3$ & $3k+3$ & $3k+2$   \\
			\hline
		\end{tabular}
	\end{center}
	\caption{Degrees of the vertices $u=(0,j)\in V_1$ when $|V_2|=n=6k+\rho$. }
	\label{tab:degreesV1}
\end{table}

\begin{proposition}
\label{G(6+n)}
The diameter of the bipartite graph $G_{6+n}=(V_1\cup V_2, E)$, on $n+6$ vertices, is $d=3$.
\end{proposition}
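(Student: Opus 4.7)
The plan is first to recast the adjacency rule in a difference-set language and then to bound, case by case, the possible distances. Writing $D=\{0,1,3\}\subset\mathbb{Z}_6$, the adjacency reads $(1,i)\sim(0,j)$ iff $j-i\in D\pmod 6$. Observe the two structural facts about $D$ that will do all the work:
\begin{align*}
\text{(F1)}\quad & D-D=\mathbb{Z}_6, \\
\text{(F2)}\quad & \text{for every } j_1\neq j_2 \text{ in }\mathbb{Z}_6 \text{ there is } i^*\in\mathbb{Z}_6 \text{ with } \{j_1,j_2\}\subset i^*+D.
\end{align*}
Both are immediate from listing the six pairwise differences $\{\pm1,\pm2,\pm3\}$ of elements of $D$, which exhaust $\mathbb{Z}_6\setminus\{0\}$; (F2) is then just (F1) rewritten with roles of sum and difference swapped. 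Throughout the argument I will use $n\ge 6$ only to guarantee that every residue class mod $6$ is represented in $\mathbb{Z}_n$.

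Next I will bound same-part distances by $2$. Two vertices $(1,i_1),(1,i_2)\in V_2$ have a common neighbor $(0,j)\in V_1$ iff $j-i_1$ and $j-i_2$ both lie in $D$, i.e.\ iff $i_1-i_2\in D-D$; by (F1) this always holds, so any two vertices of $V_2$ are at distance at most $2$ (at most $0$ if $i_1=i_2$; distance $2$ otherwise since the graph is bipartite). Two vertices $(0,j_1),(0,j_2)\in V_1$ have a common neighbor $(1,i)\in V_2$ iff $j_1,j_2\in i+D\pmod 6$; by (F2) some $i^*\in\mathbb{Z}_6$ works, and since $n\ge 6$ the class $i^*\pmod 6$ is represented by at least one $i\in\mathbb{Z}_n$, giving an actual common neighbor.

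For a mixed pair $(0,j)\in V_1$, $(1,i)\in V_2$, I will argue by a short triangle-inequality argument. The vertex $(0,j)$ has at least one neighbor in $V_2$ (again thanks to $n\ge 6$, the class $j\pmod 6$ is represented), call it $(1,i'')$. By the previous paragraph, $d((1,i''),(1,i))\le 2$, so $d((0,j),(1,i))\le 3$. Thus the diameter of $G_{6+n}$ is at most $3$. To see it is exactly $3$ it suffices to exhibit a single non-adjacent mixed pair: e.g.\ $(0,0)$ and $(1,2)$, because $0-2\equiv 4\pmod 6\notin D$; since the graph is bipartite, their distance is odd and at least $3$, hence equal to $3$.

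I expect the only subtlety to be the bookkeeping in (F2) and the use of $n\ge 6$ to lift residues mod $6$ to actual indices in $\mathbb{Z}_n$; once those are in place the argument is routine and the three distance cases close cleanly.
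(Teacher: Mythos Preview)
Your proof is correct and follows essentially the same strategy as the paper: show that any two vertices within the same partite set are at distance two, and conclude that mixed pairs are at distance at most three. The difference is mainly one of presentation. The paper carries out the argument by explicit enumeration---listing, for a generic vertex $(0,j)$ (resp.\ $(1,i)$), all neighbours and second neighbours and checking by hand that every residue modulo~$6$ appears---whereas you package the same computation into the perfect-difference-set property $D-D=\mathbb{Z}_6$ for $D=\{0,1,3\}$. Your formulation is cleaner and makes transparent why the construction works (and would generalise to any planar difference set in $\mathbb{Z}_{|V_1|}$), at the small cost of requiring the reader to unpack (F1)/(F2). You also make explicit two points the paper leaves implicit: the triangle-inequality step for mixed pairs, and the lower bound $d\ge 3$ via an explicit non-adjacent pair; both are trivial, but it is good that you noted them.
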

\begin{proof}
It suffices to prove that every pair of different vertices $u,u'\in V_1$, and every pair of different vertices $v,v'\in V_2$, are at distance two.
In the first case, notice that $u=(0,j)\in V_1$ is adjacent to every vertex $v=(1,i)\in V_2$ such that $i\equiv j,j-1, j-3 \pmod6$.
Moreover, vertex $(1,j)\in V_2$ is adjacent to vertices $(0,j+1),(0,j+3)\in V_1$, vertex $(1,j-1)\in V_2$ is adjacent to vertices
 $(0,j-1),(0,j+2)\in V_1$, and  vertex $(1,j-3)\in V_2$ is adjacent to vertices
 $(0,j-3),(0,j-2)\in V_1$. Schematically (with all arithmetic modulo 6),
\begin{align}
(0,j)\quad & \sim \quad (1, j), (1,j-1), (1, j-3) \label{u-u'}\\
 & \sim \quad (0,j+1),(0,j+3),(0,j-1),(0,j+2),(0,j-3),(0,j-2) \nonumber,
\end{align}
which are all vertices of $V_1$ different from $(0,j)$.
Similarly, starting from a vertex  of $V_2$, we have
 \begin{align*}
(1,i)\quad & \sim \quad (0, i), (0,i+1), (0, i+3)\\
 & \sim \quad (1,i), (1,i-1),(1,i-3),(1,i+1),(1,i-2),(1,i+3),(1,i+1),
\end{align*}
with the last two representing all vertices of $V_2$ because the second entries cover all values modulo 6. This completes the proof.
\end{proof}

The following result proves the existence of an infinite family of Moore bipartite biregular graphs with diameter three.
\begin{proposition}
\label{propo:(r,3)}
For any integer $r\ge 6$  such that $3 \nmid r$, there exists a bipartite graph $G$ on $2r+6$ vertices (the Moore bound for the case of degrees $(r,3)$ and diameter $3$) with degrees $3,r,r\pm 1$ and diameter $d=3$.
Moreover, when $r\equiv 2 \mod3$, there exists a Moore bipartite biregular graph with degrees $(r,3)$  and diameter $3$.
\end{proposition}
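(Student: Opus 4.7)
The plan is to take $G := G_{6+2r}$ from the construction preceding Proposition~\ref{G(6+n)}. Its order is $|V_1|+|V_2|=6+2r$, which matches the Moore-like bound derived at the start of this section when $3\nmid r$, and by construction every vertex of $V_2$ has degree exactly $3$. The diameter equals $3$ by Proposition~\ref{G(6+n)}. So only the $V_1$-degree sequence needs to be examined for the first assertion, and a small rewiring will handle the second.

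Since $3\nmid r$ we distinguish two subcases: if $r\equiv 1\pmod 3$ then $n=2r\equiv 2\pmod 6$ and $3k=r-1$, while if $r\equiv 2\pmod 3$ then $n\equiv 4\pmod 6$ and $3k=r-2$. Substituting the corresponding value of $k$ into the appropriate row of Table~\ref{tab:degreesV1} shows that in both cases four vertices of $V_1$ have degree exactly $r$, one has degree $r+1$, and one has degree $r-1$. Hence all degrees of $G$ lie in $\{3,r-1,r,r+1\}$ and the first assertion follows.

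For the biregular statement, assume $r\equiv 2\pmod 3$, so $r\ge 8$. The degree imbalance is concentrated at $u^+:=(0,3)$, of degree $r+1$, and $u^-:=(0,5)$, of degree $r-1$; their $V_2$-neighborhoods correspond modulo $6$ to the residue sets $\{0,2,3\}$ and $\{2,4,5\}$, respectively. The plan is to pick any $v_0\in N(u^+)\setminus N(u^-)$, say $v_0:=(1,0)$, and to form $G'$ by deleting the edge $u^+v_0$ and inserting the edge $u^-v_0$. This single swap reduces $\deg(u^+)$ to $r$, raises $\deg(u^-)$ to $r$, preserves $\deg(v_0)=3$, and leaves every other degree untouched; thus $G'$ is bipartite biregular with degrees $(r,3)$ and has the same order $2r+6$.

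The main obstacle is to confirm that $G'$ still has diameter $3$, since one edge has been deleted and another created. The key observation is that $u^+$ and $u^-$ still share all $k+1\ge 3$ common neighbors of the form $(1,i)$ with $i\equiv 2\pmod 6$, so $d_{G'}(u^+,u^-)=2$, and the two-step reachability arguments of Proposition~\ref{G(6+n)} inside $V_1$ and inside $V_2$ go through with only notational changes (note in particular that $v_0$ still has three distinct $V_1$-neighbors, namely $(0,0),(0,1),(0,5)$). Only the newly non-adjacent pair $(u^+,v_0)$ requires a fresh path of length~$3$, and such a path can be taken of the form $u^+-w-u-v_0$ with $u\in N_{G'}(v_0)$ and $w\neq v_0$ a common $V_2$-neighbor of $u^+$ and $u$; for $r\ge 8$ the congruence counts guarantee the existence of such a $w$ (e.g.\ $u=(0,0)$ and $w=(1,3)$), completing the argument.
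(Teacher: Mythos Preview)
Your proof is correct and follows essentially the same approach as the paper: both take $G_{6+2r}$, read off the $V_1$-degrees from Table~\ref{tab:degreesV1} to get the first assertion, and for $r\equiv 2\pmod 3$ swap a single edge from the overloaded vertex $(0,3)$ to the underloaded vertex $(0,5)$ (the paper allows any $(1,i)$ with $i\equiv 0\pmod 3$, you fix $v_0=(1,0)$), then re-verify diameter~$3$. Your diameter check is more compressed than the paper's case analysis, but the ingredients you cite---the surviving common neighbours of $(0,3)$ and $(0,5)$ in residue class~$2$, the new neighbour set $\{(0,0),(0,1),(0,5)\}$ of $v_0$, and the explicit length-$3$ path $(0,3)\!-\!(1,3)\!-\!(0,0)\!-\!(1,0)$---are exactly what is needed, and your restriction to $r\ge 8$ (hence $k\ge 2$) guarantees that $(0,3)$ retains neighbours in every residue class modulo~$6$, so the two-step reachability argument of Proposition~\ref{G(6+n)} indeed carries over.
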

\begin{proof}
From the comments at the beginning of this subsection, if $3 \nmid r$, the graph  $G_{6+n}$ of Proposition \ref{G(6+n)} with $n=2r$  has maximum order for diameter $3$. However, as shown before, not every vertex of $u\in V_1$ has degree $r$ since it is required to become a bipartite biregular Moore graph. More precisely,  if $2r=6k+\rho$ (with $\rho=2$ or $\rho=4$, since $3 \nmid r$), from the two rows in boldface of Table \ref{tab:degreesV1}, we have
 $$
\deg(u)=\left\{\begin{array}{cl}
                             r & \textrm{if} \ \ u \in \{(0,0),(0,2),(0,3),(0,4)\}, \\
                              r+1 & \textrm{if} \ \ u=(0,\rho-1), \\
                              r-1 & \textrm{if} \ \ u=(0,5). \\
                             \end{array}\right.
$$
This proves the first statement.

When  $r\equiv 2 \pmod3$, that is $\rho=4$, we obtain biregularity by modifying only one adjacency of $G_{6+n}$, as follows. Let $G'_r$ be the graph $G_{6+r}$ defined above, but where the edge $(0,3)\sim(1,i)$, for some $i\equiv 0 \pmod 3$, is switched to $(0,5)\sim(1,i)$. Then, $G'_r$ is a bipartite semiregular Moore graph of diameter $3$ for all $r\ge 5$.
To prove that $G'_r$ has diameter $d=3$, let us check again that every pair of different vertices $u,u'\in V_1$, and every pair of different vertices $v,v'\in V_2$, are at distance two.
\begin{itemize}
\item
If  $u,u'\in V_1$, we only need to consider the case when $u=(0,3)$. Then, the paths are as follows (where $(1,i)$ represents any vertex $(1,i')$ with $i'\equiv i \pmod6$):
\begin{align*}
(0,3)\quad & \sim \quad (1, 3), (1,2), (1, 0)\quad \sim \quad (0,4),(0,0),(0,2),(0,5),(0,1),
\end{align*}
because $(0,3)$ was initially adjacent to more than one vertex of type $(0,i)$ with  $i\equiv 0 \pmod 3$. Thus, all vertices of $V_0$ are reached from $(0,3)$.
\item
If  $v,v'\in V_2$, assume that $v=(1,i)$ with $i\equiv 0 \pmod6$ (the case where $i\equiv 3 \pmod6$ is similar). Then,
\begin{itemize}
\item
If $v=(1,0)$, we have the paths
\begin{align*}
(1,0)\quad & \sim \quad (0, 0), (0,1), (0,5)
 \quad \sim \quad (1,0),(1,3),(1,1),(1,4),(1,2).
\end{align*}
 Notice that, in this case, the first step is not $(1,0)\sim (0,3)$ (deleted edge), but $(1,0)\sim (0,5)$. Despite this, we still reach all vertices of $V_2$, as required.
 \item
 If $v=(1,i)$ with $i\neq 0$, the first step is
 \begin{align*}
(1,i)\quad & \sim \quad (0, i), (0,i+1), (0,i+3)
\end{align*}
So, the only problem would be when some $i,i+1,i+3$ is $3$, since we have not the adjacency $(0,3)\sim (1,0)$. But, if so, we have the following alternative adjacencies: if $i=0,3$, we  have $(0,0)\sim (1,0)$; and if $i=2$, we  have $(0,5)\sim (1,0)$.
Again, all vertices of $V_2$ are reached, completing the proof.
\end{itemize}
\end{itemize}
\vskip-.5cm
 \end{proof}

%
%
%



\begin{thebibliography}{10}

\bibitem{AraJajRam}
G. Araujo-Pardo, G. R. Jajcay, and A. Ramos Rivera,
On a relation between bipartite biregular cages, block designs and generalized polygons,
submitted (2020).

\bibitem{BamBisRoy}
J. Bamberg, A. Bishnoi, and G. F. Royle,
On regular induced subgraphs of generalized polygons,
{\em J. Comb. Theory  Ser. A} {\bf 158} (2018) 254--275.

\bibitem{BeBoPaPe83}
J. Bermond, J. Bond, M. Paoli, and C. Peyrat,
Graphs and interconnection networks: Diameter and Vulnerability,
in E. Lloyd (Ed.), \emph{Surveys in Combinatorics: Invited Papers for the Ninth British Combinatorial Conference 1983}
(London Mathematical Society Lecture Note Series, pp. 1--30), Cambridge University Press, Cambridge (1983).




\bibitem{Cox93}
H. S. M. Coxeter, {\em The real projective plane}, 3rd edition, Springer-Verlag, New York, 1993.

\bibitem{Cv75}
D. Cvetkovi\'{c},
Spectra of graphs formed by some unary operations,
\emph{Publ. I. Math.} \textbf{19(33)} (1975) 37--41.



\bibitem{ExooJaj08}
G. Exoo and R. Jajcay,
Dynamic cage survey,
{\em Electron. J. Combin.}, Dynamic Survey {\bf 16} (2008).

\bibitem{fh64}
W. Feit and G. Higman,
The nonexistence of certain generalized polygons,
{\em J. Algebra} {\bf 1} (1964) 114--131.

\bibitem{FilRamRivJaj19}
S. Filipovski, A. Ramos Rivera, and R. Jajcay,
On biregular bipartite graphs of small excess,
{\em Discrete Math.} {\bf 342} (2019) 2066--2076.












\bibitem{MP13}
B. D. McKay and A. Piperno, Practical Graph Isomorphism, II, {\em J. Symbolic Computation} (2013) {\bf 60} 94--112.

\bibitem{VM98}
H. van Maldeghem,
{\em Generalized Polygons},
Birkh\"auser, Springer, Basel (1998).

\bibitem{MS16} 
M. Miller and J. \v{S}ir\'a\v{n}, 
Moore graphs and beyond: A survey, 
{\em Electronic J. Combin.\/} \textbf{20 (2)} (2013) \#DS14v21.

 \bibitem{YFF83}
J. L. A. Yebra, M. A. Fiol, and J. F\`abrega,
Semiregular bipartite Moore graphs,
{\em Ars Combin.} {\bf 16A} (1983) 131--139.



\end{thebibliography}
\end{document}